\theoremstyle{plain}
\newtheorem{theorem}{Theorem}[section]
\newtheorem{lemma}[theorem]{Lemma}
\newtheorem{proposition}[theorem]{Proposition}
\theoremstyle{remark}
\theoremstyle{definition}
\newtheorem{definition}[theorem]{Definition}
\newtheorem{assumption}[theorem]{Assumption}
\newtheorem{remark}[theorem]{Remark} \newtheorem{example}[theorem]{Example}
\newcommand{\EE}{\mathbb{E}}
\newcommand{\RR}{\mathbb{R} }
\newcommand{\D}{\mathbb D}
\newcommand{\be}{\beta}
\newcommand{\cb}{\mathcal B}
\newcommand{\cf}{\mathcal F}
\newcommand{\al}{\alpha}
\newcommand{\ga}{\gamma}
\newcommand{\si}{\sigma}
\let\Section=\section
\def\section{\setcounter{equation}{0}\Section}
\def\RR{\mathbb{R} }
\def\EE{\mathbb{E}}
\def\cF{{\mathcal{F} }}
\def\si{{\sigma}}
\def\Om{{\Omega}}
\def\Ga{{\Gamma}}
\def\Om{{\Omega}}
\begin{document}

\title{Mean-field backward stochastic differential equations and applications}
\author{\textsc{Nacira AGRAM$^{1,2}$ Yaozhong HU$^{3,4,5}$ Bernt
\O KSENDAL$^{4,6}$}}
\date{12 February 2019}
\maketitle

\begin{abstract}
In this paper we study the mean-field backward stochastic differential
equations (mean-field bsde) of the form
\begin{align*}
dY(t) &  =-f(t,Y(t),Z(t),K(t,\cdot),\mathbb{E}[\varphi(Y(t),Z(t),K(t,\cdot
))])dt+Z(t)dB(t)\\
&  +{\textstyle\int_{\mathbb{R}_{0}}}
K(t,\zeta)\tilde{N}(dt,d\zeta),
\end{align*}
where $B$ is a Brownian motion, $\tilde{N}$ is the compensated Poisson random measure.
Under some mild conditions, we prove the existence and uniqueness of the solution
triplet $(Y,Z,K)$. It is commonly believed that there is no comparison theorem
for general mean-field bsde. However,
we prove a comparison theorem for a subclass of
these equations.When the  mean-field bsde
is linear, we give an explicit formula for
the first component $Y(t)$ of the solution triplet.
Our results are applied  to solve  a mean-field recursive utility optimization problem in finance.
\end{abstract}
\vskip 0.4cm

\paragraph{MSC [2010]:}
\emph{60H07; 60H10; 60H40; 60J75; 91B16; 91G80; 93E20.}\\

\paragraph{Keywords:}
\emph{Mean-field backward stochastic differential equations; existence and uniqueness; comparison theorem; linear mean-field BSDE; explicit solution; mean-field recursive utility problem.}

\footnotetext[1]{Department of Mathematics, Linnaeus University (LNU), V\" axj\" o, Sweden.}
\footnotetext[2]{Department of Mathematics, University Med Khider, PO Box 145, Biskra 07000, Algeria.}
\footnotetext[3]{Department of Mathematical and Statistical Sciences,
University of Alberta, Edmonton, Canada, T6G 2G1.}
\footnotetext[4]{Department of Mathematics, University of Oslo, P.O. Box 1053
Blindern, N--0316 Oslo, Norway.}

\footnotetext[5]{Supported by an NSERC grant.}
\footnotetext[6]{This research was carried out with support of the Norwegian
Research Council, within the research project Challenges in Stochastic
Control, Information and Applications (STOCONINF), project number 250768/F20.
\par
Email: agramnacira@yahoo.fr; \quad yaozhong@ualberta.ca;\quad oksendal@math.uio.no.}

\section{Introduction}

%\section{Introduction}

%In the last few years large stochastic systems of interacting particles have
%been studied by many authors. Studying their asymptotic behaviour when the
%size of the system becomes very large leads to mean-field sde.

%Consider an sde of mean-field type%
%\begin{equation}
%\left\{
%\begin{array}
%[c]{ll}%
%dX(t) & =b(t,X(t),E[X(t)])dt+\sigma(t,X(t),E[X(t)])dB(t),\\
%X(0) & =x_{0},
%\end{array}
%\right.  \label{mn-sde}%
%\end{equation}
%which can be regarded as the  limit of large systems of interacting particles%
%\begin{equation}
%\left\{
%\begin{array}
%[c]{ll}%
%dX^{i,n}(t) & =b(t,X^{i,n}(t),\frac{1}{n}%
%%TCIMACRO{\tsum _{l=1}^{n}}%
%%BeginExpansion
%{\textstyle\sum_{l=1}^{n}}
%%EndExpansion
%X^{l,n}(t))dt\\
%& +\sigma(t,X^{i,n}(t),\frac{1}{n}%
%%TCIMACRO{\tsum _{l=1}^{n}}%
%%BeginExpansion
%{\textstyle\sum_{l=1}^{n}}
%%EndExpansion
%X^{l,n}(t))dB^{i}(t),\\
%X(0) & =x_{0},
%\end{array}
%\right.  \label{l-sde}%
%\end{equation}
%as the  number of particles $n$ goes to infinity.
%%Then it describes the
%%asymptotic individual behavior of (\ref{mn-sde}).

Optimal control of mean-field stochastic differential equation
 has been studied by a number of researchers
lately. To make things more precise let us explain the situation on optimal
control of stochastic systems of the following type:
\[
\left\{
\begin{array}
[c]{ll}%
dX(t) & =b(t,X(t),\mathcal{L}(X(t)), u(t))dt+\sigma(t,X(t),\mathcal{L}%
(X(t)), u(t))dB(t),\\
X(0) & =x_{0}
\end{array}
\right.
\]
with the performance
\[
J(u)=\EE \left[\int_0^T f(x(t), u(t), \mathcal{L}(X(t))) dt+
h(X(T), \mathcal{L}(X(t))) \right]\,,
\]
where $B$ is the standard $d$-dimensional
Brownian motion,  $\mathcal{L}(X(t))$ denotes the probability law  of the state
$X(t)$ at time $t$ and $b$, $\sigma$, $f$, $h$ are some  properly
  defined function.  We refer to Anderson and Djehiche \cite{AD}, \cite{AO1}, Lasry \&
Lions \cite{ll}, Carmona \& Delarue \cite{carmona}, \cite{carmona1} and Agram
\& \O ksendal \cite{A}, \cite{AO2}  for some discussion.
In particular, Pham \& Wei \cite{PW} have introduced a dynamic programming
approach by using a randomised stopping method.

Due to the presence of the law $\mathcal{L}(X(t))$ in the equation and in the performance functional, the process $X(t)$ is no longer Markovian and it is more effective to use the Pontryagin maximum principle to solve the above mean field stochastic control problem, which will give a mean-field bsde.

To limit ourselves, we shall only deal with the case
that the law  $\mathcal{L}(X(t))$ appears in the its simplest form
of expectation (see equation \eqref{e.1.1} below).
This simplest mean-field bsde also represents interesting models in finance, for example
models of risk measures and recursive utilities.

Let $c(t)\geq 0$ be a consumption rate process from a given cash flow and let
$g(t,Y(t),\mathbb{E}[Y(t)],c(t))$ be a given driver process, assumed to be
concave with respect to $Y(t),\mathbb{E}[Y(t)],c(t)$. Then the corresponding
recursive utility $U_{g}(c)$ of the consumption $c$ is the value $Y_{g}(0)$ at
$t=0$ of the first component $Y_{g}(t)$ of the solution $Y_{g},Z_{g},K_{g}$ of
the mean-field bsde
\begin{equation}
\begin{cases}
dY(t)   =-g(t,Y(t),\mathbb{E}[Y(t)],c(t))dt\\
 \qquad \qquad\qquad \qquad   +Z(t)dB(t)+
{\textstyle\int_{\mathbb{R}_{0}}}
K(t,\zeta)\tilde{N}(dt,d\zeta),t\in\left[  0,T\right]\,,    \\
Y(T)   =0\,.
\end{cases}  \label{e.1.0}
\end{equation}
The objective is to find  the consumption rate $\hat{c}$ which maximizes the mean-field
recursive utility $U_{g}(c)=Y_{g}(0)$.

This can be seen as a generalization to mean-field (and jumps) of the
classical recursive utility concept of Duffie and Epstein \cite{DE}. See also
Duffie and Zin \cite{DZ}, Kreps and Parteus \cite{KP}, El Karoui \textit{et al}
\cite{EPQ}, \O ksendal and Sulem \cite{OS3} and Agram and R\o se \cite{AR}.

Backward sde's (bsde's) were first introduced in their linear form by Bismut
\cite{b} in connection with a stochastic version of the Pontryagin maximum
principle. Subsequently, this theory was extended by Pardoux and Peng
\cite{PP} to the nonlinear case. The first work applying bsde to finance was
the paper by El Karoui \textit{et al} \cite{EPQ} where they studied several
applications to option pricing and recursive utilities. All the above
mentioned works are in the Brownian motion framework (continuous case).
The discontinuous case is more involved.
%, especially concerning the comparison principle which requires
%additional assumptions.
Tang and Li \cite{TL} proved an existence and uniqueness result in the case of
a natural filtration associated with a Brownian motion and a Poisson random
measure. Barles \textit{et al} \cite{BBP} proved a comparison theorem for such
equations and later Royer \cite{R} extended comparison theorem under weaker assumptions.
Buckdahn \textit{et al} \cite{BLP}, has studied a mean-field bsde and they obtained a
comparison theorem under some conditions.

In this paper, we shall study the following
mean field bsde:
\begin{equation}
\begin{cases}
dY(t)  =-f(t,y(t),z(t),k(t,\cdot),\mathbb{E[\varphi(}y(t),z(t),k(t,\cdot))])dt\\
\qquad\qquad\qquad \qquad  +Z(t)dB(t)+%
%TCIMACRO{\tint _{\mathbb{R}_{0}}}%
%BeginExpansion
{\textstyle\int_{\mathbb{R}_{0}}}
%EndExpansion
K(t,\zeta)\tilde{N}(dt,d\zeta),t\in\left[  0,T\right]  ,\\
Y(T)   =\xi.\\
\end{cases}\label{e.1.1}
\end{equation}
The notation and conditions will be explained in details in Section 3.
The purpose of this  paper is the following.

\begin{itemize}
\item[(1)]  To prove new existence and uniqueness results for
the above mean-field bsde.

\item[(2)] To  give an explicit formula for the solution
when the equation is   linear.

\item[(3)] To  prove a comparison principle for a type of mean-field bsde different
from those in Buckdahn \textit{et al} \cite{BLP} and under weaker assumptions
on the driver.

\item[(4)] To  apply the obtained results to study a mean-field recursive utility optimization
problem in finance.
\end{itemize}
\section{Hida-Malliavin calculus}

In this section we give a brief summary of Malliavin calculus for processes
driven by Brownian motion and compensated Poisson random measures.

Let $(\Omega,\cF, \mathbb{P})$ be a probability space equipped with a
filtration $\{\cF_{t}\}_{0\le t\le T}$. The expectation on this probability
space is denoted by $\EE$ and the conditional expectation $\EE(\cdot|\cf_{t})$
is denoted by $\EE^{\cf_{t}}(\cdot)= \EE(\cdot|\cf_{t})$. Let $(B(t), 0\le
t\le T)$ be a Brownian motion.
%Consider a L\'evy process $\eta(t),t\geq0,$%
%\[
%\eta(t)=at+bB(t)+%
%%TCIMACRO{\tint _{0}^{t}}%
%%BeginExpansion
%{\textstyle\int_{0}^{t}}
%%EndExpansion%
%%TCIMACRO{\tint _{\mathbb{R}_{0}}}%
%%BeginExpansion
%{\textstyle\int_{\mathbb{R}_{0}}}
%%EndExpansion
%\zeta\tilde{N}(ds,d\zeta)
%\]
%defined on a probability space $\left(  \Omega,\mathcal{F},P\right)  .$
Let $(N([0,t],\cb), 0\le t\le T, \cb\subseteq\RR_{0}=\RR-\{0\}\in\cb(\RR))$ be
a Poisson random measure. Denote by $\nu(\mathcal{B}) $ its associated L\'evy
measure so that $\mathbb{E}[N([0,t],\mathcal{B})]=\nu(\cb)t$.

%The
%gives the number of jumps of $\eta$ up to
%time $t$ with jump size in the set $\mathcal{B\subset}\mathbb{R}%
%_{0}:=\mathbb{R}-\{0\}.$   and $N(dt,d\zeta)$ is the
%differential notation of the random measure $N([0,t],\mathcal{B})$.
%Intuitively, $\zeta$ can be regarded as a generic jump size.
Let $\tilde{N}(\cdot)$ denote the compensated Poisson measure of $N$ defined
by $\tilde{N}(dt,d\zeta):=N(dt,d\zeta)-\nu(d\zeta)dt$.
We assume that $\cf_{t}=\si(B(s),N([0,s],\cb)\,,0\leq s\leq t\,,\cb\in
\cb(\RR_{0})$. Any square integrable functional $F\in L^{2}(\Om,\cf,\mathbb{P}%
)$ can be written as
\begin{equation}
F=%
%TCIMACRO{\tsum _{m,n=0}^{\infty}}%
%BeginExpansion
{\textstyle\sum_{m,n=0}^{\infty}}
%EndExpansion
I_{m,n}(f_{m,n})\,,
\end{equation}
where $f_{m,n}(s,t,\zeta)=f_{m,n}(s_{1},\cdots,s_{m};t_{1},\zeta_{1}%
,\cdots,t_{n}\,,\zeta_{n})$ is a function of $m+n$ variables which is
symmetric in the first $m$ variables $s=(s_{1},\cdots,s_{n})$ and the last
$n$-variables $(t,\zeta)=((t_{1},\zeta_{1}),\cdots,(t_{n},\zeta_{n}))$
satisfying
\begin{equation}%
%TCIMACRO{\tint _{\lbrack0,T]^{m+n}\times\RR^{n}}}%
%BeginExpansion
{\textstyle\int_{\lbrack0,T]^{m+n}\times\RR^{n}}}
%EndExpansion
|f(s,t,\zeta)|^{2}ds_{1}\cdots ds_{m}dt_{1}\cdots dt_{n}\nu(d\zeta_{1}%
)\cdots\nu(d\zeta_{n})<\infty
\end{equation}
and
\begin{equation}
I_{m,n}(f_{m,n})=%
%TCIMACRO{\tint _{[0,T]^{m+n}\times\RR^{n}}}%
%BeginExpansion
{\textstyle\int_{[0,T]^{m+n}\times\RR^{n}}}
%EndExpansion
f_{m,n}(s,t,\zeta)dB(s_{1})\cdots dB(s_{m})\tilde{N}(dt_{1},d\zeta_{1}%
)\cdots\tilde{N}(dt_{n},d\zeta_{n})
\end{equation}
is the mixed multiple integral. It is easy to see that
\begin{equation}
\EE(F^{2})=%
%TCIMACRO{\tsum _{m,n=1}^{\infty}}%
%BeginExpansion
{\textstyle\sum_{m,n=1}^{\infty}}
%EndExpansion
m!n!%
%TCIMACRO{\tint _{[0,T]^{m+n}\times\RR^{n}}}%
%BeginExpansion
{\textstyle\int_{[0,T]^{m+n}\times\RR^{n}}}
%EndExpansion
|f(s,t,\zeta)|^{2}ds_{1}\cdots ds_{m}dt_{1}\cdots dt_{n}\nu(d\zeta_{1}%
)\cdots\nu(d\zeta_{n})\,.
\end{equation}
We define the Malliavin derivative as $D=(D_{r}^{1},D_{\rho,\zeta}^{2})$
(where $D^{1}$ denotes the \textit{partial} Malliavin derivative with respect
to the Brownain motion and $D^{2}$ denotes the \textit{partial} Malliavin
derivative with respect to the compensated Poisson process) as follows

\begin{definition}
We say that $F$ is in $\D_{1,2}$ if
\begin{equation}%
%TCIMACRO{\tsum _{m,n=1}^{\infty}}%
%BeginExpansion
{\textstyle\sum_{m,n=1}^{\infty}}
%EndExpansion
(m+n)m!n!%
%TCIMACRO{\tint _{[0,T]^{m+n}\times\RR^{n}}}%
%BeginExpansion
{\textstyle\int_{[0,T]^{m+n}\times\RR^{n}}}
%EndExpansion
|f(s,t,\zeta)|^{2}ds_{1}\cdots ds_{m}dt_{1}\cdots dt_{n}\nu(d\zeta_{1}%
)\cdots\nu(d\zeta_{n})]<\infty\,.
\end{equation}
We define
\begin{align}
&  D_{r}^{1}I_{m,n}(f_{m,n})
 =mI_{m-1,n}(f_{m,n}(r,\cdot,\cdot,\cdot))\nonumber\\
&  =%
%TCIMACRO{\tint _{[0,T]^{m+n-1}\times\RR^{n}}}%
%BeginExpansion
{\textstyle\int_{[0,T]^{m+n-1}\times\RR^{n}}}
%EndExpansion
f_{m,n}(s_{1},\cdots,s_{m-1},r;t,\zeta)dB(s_{1})\cdots dB(s_{m-1})\tilde
{N}(dt_{1},d\zeta_{1})\cdots\tilde{N}(dt_{n},d\zeta_{n})\,;
\end{align}
and
\begin{align}
&  D_{t,\zeta}^{2}I_{m,n}(f_{m,n})
=nI_{m-1,n}(f_{m,n}(\cdot,\cdot,(t,\zeta))\nonumber\\
&  =n%
%TCIMACRO{\tint _{[0,T]^{m+n-1}\times\RR^{n-1}}}%
%BeginExpansion
{\textstyle\int_{[0,T]^{m+n-1}\times\RR^{n-1}}}
%EndExpansion
f_{m,n}(s_{1},\cdots,s_{m};t_{1},\zeta_{1},\cdots,t_{n-1},\zeta_{n-1}%
,t,\zeta)\nonumber\\
&dB(s_{1})\cdots dB(s_{m})\tilde{N}(dt_{1},d\zeta_{1}%
)\cdots\tilde{N}(dt_{n-1},d\zeta_{n-1})\,.
\end{align}

\end{definition}

When there is confusion, we shall also omit the superscript and write
$D_{r}=D_{r}^{1}$ and $D_{t,\zeta}=D_{t,\zeta}^{2}$.\newline The Malliavin
derivative $D_{t}$ was originally introduced by Malliavin \cite{M} as a
stochastic calculus of variation used to prove results about smoothness of
densities of solutions of stochastic differential equations in $\mathbb{R}%
^{n}$ driven by Brownian motion. The domain of definition of the Malliavin
derivative is a subspace $\mathbb{D}_{1,2}$ of $\mathbb{L}^{2}(P)$. We refer
to Stroock \cite{Stroock}, Nualart \cite{nualart}, Sanz-Sol\`{e} \cite{S}, Di Nunno \textit{et al} \cite{DOP}
and to Hu \cite{hubook} for information about the Malliavin derivative $D_{t}$
for Brownian motion and, more generally, L\'{e}vy processes. Subsequently, in
Aase \textit{et al} \cite{AaOPU} the Malliavin derivative was put into the
context of the white noise theory of Hida and extended to an operator defined
on the whole of $\mathbb{L}^{2}( \mathbb{P})$ and with values in the Hida space
$(\mathcal{S})^{\ast}$ of stochastic distributions (see below for details). It
is this extension, called the \emph{Hida-Malliavin} derivative, that we will
use in this paper. \newline There are several advantages with working with the
Hida-Malliavin derivative, as we explain in the following:

\begin{itemize}
\item The Hida-Malliavin derivative is defined on all of $\mathbb{L}^{2}( \mathbb{P})$,
and it is an extension of the classical Malliavin derivative, in the sense
that it coincides with the classical Malliavin derivative on the subspace
$\mathbb{D}_{1,2}.$

\item The Hida-Malliavin derivative combines well with the white noise
calculus, including the Skorohod integral and calculus with the Wick product
$\diamond$.
\end{itemize}

It was proved in Aase \textit{et al} \cite{AaOPU} that one can extend the
Malliavin derivative operator $D_{t}$ from $\mathbb{D}_{1,2}$ to all of
$\mathbb{L}^{2}(\mathcal{F}_{T}, \mathbb{P})$ in such a way that, also denoting the
extended operator by $D_{t}$, for all $F\in\mathbb{L}^{2}(\mathcal{F}_{T}, \mathbb{P})$,
we have
\begin{equation}
D_{t}F\in(\mathcal{S})^{\ast}\text{ and }(t,\omega)\mapsto\mathbb{E}%
[D_{t}F\mid\mathcal{F}_{t}]\text{ belongs to }\mathbb{L}^{2}(\lambda\times
 \mathbb{P}).\label{eq2.10a}%
\end{equation}
We give some properties of Hida-Malliavin derivatives. We refer to Di Nunno \textit{et al} \cite{DOP} and Agram and \O ksendal \cite {AO} for proofs and more details:

\label{eq2.2}
\begin{example}
\begin{enumerate}

\item[(i)] (Chain rule I) If $F \in L^2(\mathcal{F}_T, \mathbb{P})$ and $\varphi \in C^1 (\mathbb{R})$, then
\begin{equation}
D_t(\varphi(F)) = \varphi'(F) D_t F, \quad t \in [0,T].
\end {equation}

\item[(ii)]
(Chain rule II) If $G \in L^2(\mathcal{F}_T, \mathbb{P})$ and $\varphi \in C^1 (\mathbb{R})$, then
\begin{equation}
D_{t,\zeta}(\varphi(G)) = \varphi(G + D_{t,\zeta}G) - \varphi(G), \quad (t,\zeta) \in [0,T] \times \mathbb{R}_0.
\end {equation}

\item[(iii)]
Suppose that $F \in L^2(\mathcal{F}_t, \mathbb{P})$.
Then
$D_rF=D_{r,\zeta}F=0$ for all $r < t,\zeta \in \mathbb{R}_0$.

\item[(iv)]
Suppose $\varphi \in L^2(\lambda \times  \mathbb{P})$ is adapted and that $\psi \in L^2(\lambda \times \nu \times  \mathbb{P})$ is predictable, $\lambda$ being Lebesgue measure on $[0,T]$. Then
\begin{align*}
D_r(\int_0^T \varphi(t)dt)&= \int_r^T D_r \varphi(t)dt\\
D_r(\int_0^T \varphi (t)dB(t))&= \int_r^T D_r\varphi(t) dB(t) + \varphi(r)\\
D_{r,z}(\int_0^T \int_{\mathbb{R}} \psi(t,\zeta) \nu(d\zeta) dt)&= \int_r^T \int_{\mathbb{R}} D_{r,z} \psi(t,\zeta) \nu(d\zeta)dt\\
D_{r,z}(\int_0^T \int_{\mathbb{R}} \psi (t,\zeta)\tilde{N}(dt,d\zeta))&= \int_r^T \int_{\mathbb{R}}D_{r,z}\psi(t,\zeta) \tilde{N}(dt,d\zeta) + \psi(r,z).\\
\end{align*}

\item[(v)] Representation of bsde solution (Agram and \O ksendal \cite{AO},Theorem 2.7) : Suppose that $(p(t),q(t),r(t,\zeta))$ solves a bsde of the
form
\[
\left\{
\begin{array}
[c]{ll}%
dp(t) & = -g(t,p(t),q(t),r(t,\cdot))dt+q(t)dB(t) + \int_{\mathbb{R}_{0}} r(t,\zeta) \tilde{N}(dt,d\zeta) , 0\leq t\leq T,\\
p(T) & =F.
\end{array}
\right.
\]
Then%
\[
q(t)=D_{t^-}p(t)\text{ \ \ \ }(:=\underset{\epsilon\rightarrow0}{\lim
}D_{t-\epsilon}p(t))
\]%

and
\[
r(t,\zeta)= D_{t^{-},\zeta} p(t).
\]
\end{enumerate}
\end{example}

\section{Mean-field BSDE's}

%\textbf{We deal only with the one-dimensional case but an extension is
%straightforward, check!}
%\textbf{in what follows, we denote by }$\mathbf{f}$\textbf{ the driver which
%may change but it clear from the context!!!!!}

\subsection{Existence and uniqueness of the solution}

We define the following spaces for the solution triplet:

\begin{itemize}
\item $S^{2}$ consists of the $\mathbb{F}$-adapted c\`adl\`ag processes
$Y:\Omega\times\lbrack0,T]\rightarrow\mathbb{R},$ equipped with the norm
\[
\parallel Y\parallel_{S^{2}}^{2}:=\mathbb{E[}\sup_{t\in\lbrack0,T]}%
|Y(t)|^{2}]<\infty.
\]

\item $L^{2}$ consists of the $\mathbb{F}$-predictable processes
$Z:\Omega\times\lbrack0,T]\rightarrow\mathbb{R},$ with%
\[
\parallel Z\parallel_{L^{2}}^{2}:=\mathbb{E[}%
%TCIMACRO{\tint _{0}^{T}}%
%BeginExpansion
{\textstyle\int_{0}^{T}}
%EndExpansion
\left\vert Z(t)\right\vert ^{2}dt]<\infty.
\]
{}

\item $L_{\nu}^{2}$ consists of Borel functions $K:%
%TCIMACRO{\U{211d} }%
%BeginExpansion
\mathbb{R}
%EndExpansion
_{0}\rightarrow\mathbb{R},$ such that
\[
\parallel K\parallel_{L_{\nu}^{2}}^{2}:=%
%TCIMACRO{\tint _{\mathbb{R}_{0}}}%
%BeginExpansion
{\textstyle\int_{\mathbb{R}_{0}}}
%EndExpansion
|K( \zeta)|^{2}\nu(d\zeta)<\infty.
\]

\item $H_{\nu}^{2}$ consists of $\mathbb{F}$-predictable
processes$\ K:\Omega\times\lbrack0,T]\times%
%TCIMACRO{\U{211d} }%
%BeginExpansion
\mathbb{R}
%EndExpansion
_{0}\rightarrow\mathbb{R},$ such that for any fixed $t\in[0, T]$, $K(t,\zeta)$
is any element in $L^{2}_{\nu}$ and
\[
\parallel K\parallel_{H_{\nu}^{2}}^{2}:=\mathbb{E[}%
%TCIMACRO{\tint _{0}^{T}}%
%BeginExpansion
{\textstyle\int_{0}^{T}}
%EndExpansion%
%TCIMACRO{\tint _{\mathbb{R}_{0}}}%
%BeginExpansion
{\textstyle\int_{\mathbb{R}_{0}}}
%EndExpansion
K(t,\zeta)^{2}\nu(d\zeta)dt]<\infty.
\]

\item $L^{2}(\Omega,\mathcal{F}_{T})$ is the set of square integrable random
variables which are $\mathcal{F}_{T}$-measurable.
\end{itemize}

Let
\[
f:\Omega\times\lbrack0,T]\times\mathcal{\mathbb{R}}^{2}\times L_{\nu}%
^{2}\times\mathcal{%
%TCIMACRO{\U{211d} }%
%BeginExpansion
\mathbb{R}
%EndExpansion
}^{d}\rightarrow\mathcal{%
%TCIMACRO{\U{211d} }%
%BeginExpansion
\mathbb{R}
%EndExpansion
},
\]
be $\mathcal{F}_{t}$-progressively measurable. We consider the following
mean-field bsde:
\begin{equation}%
\begin{array}
[c]{c}%
dY(t)=f(t,Y(t),Z(t),K(t,\cdot),\mathbb{E[\varphi(}Y(t),Z(t),K(t,\cdot))])dt+Z(t)dB(t)\\
+%
%TCIMACRO{\tint _{\RR_{0}}}%
%BeginExpansion
{\textstyle\int_{\RR_{0}}}
%EndExpansion
K(t,\zeta)\tilde{N}(dt,d\zeta).
\end{array}
\label{b1add}%
\end{equation}

\begin{definition}
A process
\[
(Y,Z,K)\in S^{2}\times L^{2}\times H_{\nu}^{2}%
\]
is said to be a \emph{solution triplet} to the mean-field bsde \eqref{b1add}
with terminal condition $Y(T)=\xi$ if%
\[%
%TCIMACRO{\tint _{0}^{T}}%
%BeginExpansion
{\textstyle\int_{0}^{T}}
%EndExpansion
\left\vert f(s,Y(s),Z(s),K(s,\cdot),\mathbb{E[\varphi(}Y(s),Z(s),K(s,\cdot
))])\right\vert ds<+\infty\text{ }\mathbb{P}\text{-a.s.,}%
\]
and
\begin{equation}%
\begin{array}
[c]{c}%
Y(t)=\xi+%
%TCIMACRO{\tint _{t}^{T}}%
%BeginExpansion
{\textstyle\int_{t}^{T}}
%EndExpansion
f(s,Y(s-),Z(s),K(s,\cdot),\mathbb{E[\varphi(}Y(s-),Z(s),K(s,\cdot))])ds\\
-%
%TCIMACRO{\tint _{t}^{T}}%
%BeginExpansion
{\textstyle\int_{t}^{T}}
%EndExpansion
Z(s)dB(s)-%
%TCIMACRO{\tint _{t}^{T}}%
%BeginExpansion
{\textstyle\int_{t}^{T}}
%EndExpansion%
%TCIMACRO{\tint _{\mathbb{R}_{0}}}%
%BeginExpansion
{\textstyle\int_{\mathbb{R}_{0}}}
%EndExpansion
K(s,\zeta)\tilde{N}(ds,d\zeta),t\in\left[  0,T\right]  .
\end{array}
\label{b1}%
\end{equation}

where $\xi\in L^{2}(\Omega,\mathcal{F}_{T})$ is called the terminal condition
and $f$ is the generator.
\end{definition}

%\begin{remark}
%Streactly speaking, it should be $Y(s^{-})$ but for simplicity we avoid the (-)
%from now on.
%\end{remark}

Strictly speaking it is $Y(s-)$ in the above equation but for simplicity we will drop the minus from now on.
To obtain the existence and uniqueness of a solution we make the following set
of assumptions.

\begin{assumption}\label{a.3.1}
For driver $f$ we assume
\begin{enumerate}
\item[(a)]   $f$ is square integrable with respect to $t$:
\[
\mathbb{E}[%
%TCIMACRO{\tint _{0}^{T}}%
%BeginExpansion
{\textstyle\int_{0}^{T}}
%EndExpansion
|f(t,0,0,0,0)|^{2}dt]<\infty\,.
%\text{ }\mathbb{P}\text{-a.s.}%
\]
\item[(b)] There exists a constant $C  >0$, such that{\normalsize \ } for all {\normalsize $t\in\left[  0,T\right]  $ }and for all
$y_{1},y_{2},z_{1},z_{2}\in\mathcal{%
%TCIMACRO{\U{211d} }%
%BeginExpansion
\mathbb{R}
%EndExpansion
},$ $k_{1},k_{2}\in L_{\nu}^{2}$ and $\mu_{1},\mu_{2}\in\mathcal{%
%TCIMACRO{\U{211d} }%
%BeginExpansion
\mathbb{R}
%EndExpansion
}^d${\normalsize $,$ }
\begin{align*}
&  \left\vert f(t,y_{1},z_{1},k_{1},\mu_{1})-f(t,y_{2},z_{2},k_{2},\mu
_{2})\right\vert \\
&  \leq C^{\prime}(|y_{1}-y_{2}|+|z_{1}-z_{2}|+(%
%TCIMACRO{\tint _{\mathbb{R}_{0}}}%
%BeginExpansion
\|k_1-k_2\|_{L^2(\nu)}+\left\vert \mu_{1}-\mu
_{2}\right\vert ),\text{ }\mathbb{P}\text{-a.s.}%
\end{align*}
\end{enumerate}
For the mean functional, we assume
\begin{enumerate}
\item[(c)] For each $t\in\lbrack0,T],$ the (vector valued) function $\mathbb{\varphi}%
:\Omega\times\lbrack0,T]\times\mathcal{%
%TCIMACRO{\U{211d} }%
%BeginExpansion
\mathbb{R}
%EndExpansion
}^{2}\times L_{\nu}^{2}\rightarrow\mathcal{%
%TCIMACRO{\U{211d} }%
%BeginExpansion
\mathbb{R}
%EndExpansion
}^d$ is assumed to be continuously differentiable with bounded partial
derivatives, such that%
\[
|\tfrac{\partial\mathbb{\varphi}}{\partial y}\mathbb{(}y,z,k)|+|\tfrac
{\partial\mathbb{\varphi}}{\partial z}\mathbb{(}y,z,k)|+||\nabla
_{k}\mathbb{\varphi(}y,z,k )||_{L_{\nu}^{2}}\leq C,
\]
for a given constant $C>0$ and $\nabla_{k}\mathbb{\varphi(}y,z,k )$ is
the Fr\'echet derivative of $\mathbb{\varphi}$ with respect to $k$.
\end{enumerate}
\end{assumption}

\begin{theorem}
\label{thm_exis} Under the assumption \ref{a.3.1}, the mean-field bsde
(\ref{b1}) has a unique solution.
\end{theorem}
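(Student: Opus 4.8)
The plan is to solve \eqref{b1} by a Banach fixed‑point argument in the space $\mathcal{B}:=L^{2}\times L^{2}\times H_{\nu}^{2}$, equipped, for a parameter $\beta>0$ to be fixed later, with the weighted norm
\[
\|(y,z,k)\|_{\beta}^{2}:=\mathbb{E}\Big[\int_{0}^{T}e^{\beta t}\big(|y(t)|^{2}+|z(t)|^{2}+\|k(t,\cdot)\|_{L_{\nu}^{2}}^{2}\big)\,dt\Big].
\]
Given $(y,z,k)\in\mathcal{B}$, freeze the driver along this triple:
\[
g_{y,z,k}(t):=f\big(t,y(t),z(t),k(t,\cdot),\mathbb{E}[\varphi(y(t),z(t),k(t,\cdot))]\big).
\]
Using Assumption \ref{a.3.1}(a)--(b), the global Lipschitz bound on $\varphi$ in each argument implied by the bounded‑derivative hypothesis in Assumption \ref{a.3.1}(c) (via the mean value theorem), and Jensen's inequality, one checks $g_{y,z,k}\in L^{2}(\Omega\times[0,T])$. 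Hence the BSDE with this (solution‑independent) driver and terminal value $\xi$,
\[
Y(t)=\xi+\int_{t}^{T}g_{y,z,k}(s)\,ds-\int_{t}^{T}Z(s)\,dB(s)-\int_{t}^{T}\!\!\int_{\mathbb{R}_{0}}K(s,\zeta)\tilde N(ds,d\zeta),
\]
has a unique solution $(Y,Z,K)\in S^{2}\times L^{2}\times H_{\nu}^{2}$: indeed $Y(t)=\mathbb{E}\big[\xi+\int_{t}^{T}g_{y,z,k}(s)\,ds\mid\mathcal{F}_{t}\big]$, with $(Z,K)$ furnished by the martingale representation theorem for $B$ and $\tilde N$, and $Y\in S^{2}$ following from Doob's and the Burkholder--Davis--Gundy inequalities. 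This defines a map $\Phi:\mathcal{B}\to\mathcal{B}$, $\Phi(y,z,k)=(Y,Z,K)$, whose fixed points are exactly the solution triplets of \eqref{b1}.

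Next I would show $\Phi$ is a strict contraction for $\beta$ large. Let $(Y^{i},Z^{i},K^{i})=\Phi(y^{i},z^{i},k^{i})$, $i=1,2$, and write $\Delta Y=Y^{1}-Y^{2}$, $\Delta g=g_{y^{1},z^{1},k^{1}}-g_{y^{2},z^{2},k^{2}}$, and similarly for the other quantities. Applying the It\^o formula to $s\mapsto e^{\beta s}|\Delta Y(s)|^{2}$ on $[t,T]$ and taking expectations gives
\begin{align*}
\mathbb{E}\big[e^{\beta t}|\Delta Y(t)|^{2}\big]&+\beta\,\mathbb{E}\!\int_{t}^{T}e^{\beta s}|\Delta Y(s)|^{2}\,ds+\mathbb{E}\!\int_{t}^{T}e^{\beta s}\big(|\Delta Z(s)|^{2}+\|\Delta K(s,\cdot)\|_{L_{\nu}^{2}}^{2}\big)\,ds\\
&=2\,\mathbb{E}\!\int_{t}^{T}e^{\beta s}\,\Delta Y(s)\,\Delta g(s)\,ds.
\end{align*}
By Assumption \ref{a.3.1}(b) and the Lipschitz property of $\varphi$ together with Jensen's inequality,
\[
|\Delta g(s)|\le C'\big(|\Delta y(s)|+|\Delta z(s)|+\|\Delta k(s,\cdot)\|_{L_{\nu}^{2}}\big)+C'C\,\mathbb{E}\big[|\Delta y(s)|+|\Delta z(s)|+\|\Delta k(s,\cdot)\|_{L_{\nu}^{2}}\big].
\]
Using $2ab\le\frac{\beta}{2}a^{2}+\frac{2}{\beta}b^{2}$ to absorb the $\Delta Y$ factor into the left‑hand side, and Jensen once more (in the form $(\mathbb{E}|X|)^{2}\le\mathbb{E}|X|^{2}$) on the mean‑field contribution, one obtains, after taking $t=0$ and $\beta\ge 2$, an estimate
\[
\|\Phi(y^{1},z^{1},k^{1})-\Phi(y^{2},z^{2},k^{2})\|_{\beta}^{2}\le\frac{C''}{\beta}\,\|(y^{1},z^{1},k^{1})-(y^{2},z^{2},k^{2})\|_{\beta}^{2},
\]
with $C''$ depending only on $C'$, $C$ and $T$. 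Choosing $\beta>\max(2,C'')$ makes $\Phi$ a strict contraction on the Banach space $(\mathcal{B},\|\cdot\|_{\beta})$; by the contraction mapping theorem it has a unique fixed point $(Y,Z,K)$, and together with the corresponding $Y\in S^{2}$ from the representation above this is the unique solution triplet of \eqref{b1}, proving the theorem.

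The step I expect to be the main obstacle is the contraction estimate in the presence of the mean‑field term $\mathbb{E}[\varphi(\cdot)]$: since this quantity couples all of the randomness, one must take care to bound $|\Delta g|$ through the \emph{deterministic} quantity $\mathbb{E}[|\Delta y|+|\Delta z|+\|\Delta k\|_{L_{\nu}^{2}}]$ and then control it back in the $\|\cdot\|_{\beta}$ norm by Jensen's inequality, so that no additional coupling survives and the resulting contraction constant is still $O(1/\beta)$; the remaining It\^o/Gronwall bookkeeping is routine, as in the classical Pardoux--Peng and Tang--Li arguments. A minor point worth flagging is that the global Lipschitz continuity of $\varphi$ used above is not assumed directly but is deduced from the bounded‑partial‑derivative hypothesis in Assumption \ref{a.3.1}(c).
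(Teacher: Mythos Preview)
Your proof is correct and follows essentially the same route as the paper: a Banach fixed-point argument on $L^{2}\times L^{2}\times H_{\nu}^{2}$ with the $e^{\beta t}$-weighted norm, defining $\Phi$ by freezing the driver, applying It\^o's formula to $e^{\beta s}|\Delta Y(s)|^{2}$, and using the Lipschitz assumption on $f$ together with the mean value theorem for $\varphi$ (plus Jensen for the mean-field term) to obtain a contraction for $\beta$ large. The paper's proof is terser but the ingredients and the structure are the same; your explicit handling of the $\mathbb{E}[\varphi(\cdot)]$ term via Jensen and the remark that Lipschitz continuity of $\varphi$ comes from Assumption~\ref{a.3.1}(c) are exactly what the paper sweeps under ``mean value theorem, standard majorization''.
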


\begin{proof}
For $t\in\lbrack0,T]$ and all $\beta>0,$ we introduce the norm%
\[
||(Y,Z,K)||_{\mathbb{H}_{\beta}}^{2}:=\mathbb{E[}%
%TCIMACRO{\tint _{0}^{T}}%
%BeginExpansion
{\textstyle\int_{0}^{T}}
%EndExpansion
e^{\beta t}\{|Y(t)|^{2}+|Z(t)|^{2}+%
%TCIMACRO{\tint _{\mathbb{R}_{0}}}%
%BeginExpansion
{\textstyle\int_{\mathbb{R}_{0}}}
%EndExpansion
|K(t,\zeta)|^{2}\nu(d\zeta)\}dt].
\]
The space $\mathbb{H}_{\beta}$ equipped with this norm is an Hilbert space.
Define the mapping $\Phi:\mathbb{H}_{\beta}\rightarrow\mathbb{H}_{\beta}$ by
$\Phi(y,z,k)=(Y,Z,K)$ where $(Y,Z,K)\in S^{2}\times L^{2}\times H_{\nu}^{2}$
$(\subset L^{2}\times L^{2}\times H_{\nu}^{2})$ is defined by%
\[
\left\{
\begin{array}
[c]{ll}%
dY(t) & =-f(t,y(t),z(t),k(t,\cdot),\mathbb{E[\varphi(}y(t),z(t),k(t,\cdot))])dt\\
& +Z(t)dB(t)+%
%TCIMACRO{\tint _{\mathbb{R}_{0}}}%
%BeginExpansion
{\textstyle\int_{\mathbb{R}_{0}}}
%EndExpansion
K(t,\zeta)\tilde{N}(dt,d\zeta),t\in\left[  0,T\right]  ,\\
Y(T) & =\xi.
\end{array}
\right.
\]
To prove the theorem it suffices to prove that $\Phi$ is contraction mapping
in $\mathbb{H}_{\beta}$ under the norm $||\cdot||_{\beta}$ for sufficiently
small $\beta$. For two arbitrary triplet $(y^{1},z^{1},k^{1}),(y^{2}%
,z^{2},k^{2})$ and $(Y^{1},Z^{1},K^{1}),(Y^{2},Z^{2},K^{2})$, we denote their
difference by $\widetilde{y}=y^{1}-y^{2}$ and $\widetilde{Y}=Y^{1}-Y^{2}$ and
similarly for $z,k,Z$ and $K$. Applying It\^o's formula to $e^{\beta
t}|\widetilde{Y}(t)|^{2}$%

\begin{align*}
& \mathbb{E[}%
%TCIMACRO{\tint _{t}^{T}}%
%BeginExpansion
{\textstyle\int_{0}^{T}}
%EndExpansion
e^{\beta t}\{\beta|\widetilde{Y}(t)|^{2}+|\widetilde{Z}(t)|^{2}+%
%TCIMACRO{\tint _{\mathbb{R}_{0}}}%
%BeginExpansion
{\textstyle\int_{\mathbb{R}_{0}}}
%EndExpansion
|\widetilde{K}(t,\zeta)|^{2}\nu(d\zeta)\}dt]\\
&  =2\mathbb{E}[%
%TCIMACRO{\tint _{t}^{T}}%
%BeginExpansion
{\textstyle\int_{t}^{T}}
%EndExpansion
e^{\beta t}\widetilde{Y}(t)\{f(t,y^{1}(t),z^{1}(t),k^{1}(t,\cdot),\mathbb{E[\varphi
(}y^{1}(t),z^{1}(t),k^{1}(t,\cdot)])\\
&  -f(t,y^{2}(t),z^{2}(t),k^{2}(t,\cdot),\mathbb{E[\varphi(}y^{2}(t),z^{2}%
(t),k^{2}(t,\cdot))])\}dt]\;.
\end{align*}
{By the Lipschitz property of the map $f$, the mean value theorem, standard
majorization and by choosing }$\beta=1+12\overline{C}^{2}$ $(\overline{C}$
depends only on $C$ and $C^{\prime}$){, it follows that}%
\begin{align*}
&  \mathbb{E[}%
%TCIMACRO{\tint _{0}^{T}}%
%BeginExpansion
{\textstyle\int_{0}^{T}}
%EndExpansion
e^{\beta t}\{|\widetilde{Y}(t)|^{2}+|\widetilde{Z}(t)|^{2}+%
%TCIMACRO{\tint _{\mathbb{R}_{0}}}%
%BeginExpansion
{\textstyle\int_{\mathbb{R}_{0}}}
%EndExpansion
|\widetilde{K}(t,\zeta)|^{2}\nu(d\zeta)\}dt]\\
&  \leq\tfrac{1}{2}\mathbb{E}[%
%TCIMACRO{\tint _{0}^{T}}%
%BeginExpansion
{\textstyle\int_{0}^{T}}
%EndExpansion
e^{\beta t}\{|\widetilde{y}(t)|^{2}+|\widetilde{z}(t)|^{2}+%
%TCIMACRO{\tint _{\mathbb{R}_{0}}}%
%BeginExpansion
{\textstyle\int_{\mathbb{R}_{0}}}
%EndExpansion
|\widetilde{k}(t,\zeta)|^{2}\nu(d\zeta)\}dt],
\end{align*}
Consequently, we get
\[
||(\widetilde{Y},\widetilde{Z},\widetilde{K})||_{\beta}^{2}\leq\tfrac{1}%
{2}||(\widetilde{y},\widetilde{z},\widetilde{k})||_{\beta}^{2}\;,
\]
and $\Phi$ is then a contraction mapping. The theorem can now deduced by
standard theorem.
\end{proof}

\begin{remark}
In the above theorem if we take $d=3$, $\varphi_{i}(x_{1},x_{2},x_{3})=x_{i}$
for $i=1,2,3$, we see that the following mean-field bsde has a unique
solution
\[
\left\{
\begin{array}
[c]{ll}%
dY(t) & =-f(t,Y(t),Z(t),K(t,\cdot),\mathbb{E[}Y(t)],\mathbb{E[}Z(t)],\mathbb{E[}%
K(t,\cdot)])dt\\
& \text{ \ \ \ \ \ \ \ \ }+Z(t)dB(t)+%
%TCIMACRO{\tint _{\mathbb{R}_{0}}}%
%BeginExpansion
{\textstyle\int_{\mathbb{R}_{0}}}
%EndExpansion
K(t,\zeta)\tilde{N}(dt,d\zeta),t\in\left[  0,T\right]  ,\\
Y(T) & =\xi,
\end{array}
\right.
\]
where $f:\Omega\times\lbrack0,T]\times\mathcal{%
%TCIMACRO{\U{211d} }%
%BeginExpansion
\mathbb{R}
%EndExpansion
}^{2}\times L_{\nu}^{2}\times\mathcal{%
%TCIMACRO{\U{211d} }%
%BeginExpansion
\mathbb{R}
%EndExpansion
}^{3}\rightarrow\mathcal{%
%TCIMACRO{\U{211d} }%
%BeginExpansion
\mathbb{R}
%EndExpansion
}$ satisfies the Assumption \ref{a.3.1}.
%$y,z,k(\cdot)$,$\bar{y}:=\mathbb{E}[y],$ $\bar{z}:=\mathbb{E}[z],$ $\bar
%{k}(\cdot):=\mathbb{E}[k(\cdot)]$, and the terminal condition $\xi\in
%L^{2}\left(  \Omega,\mathcal{F}_{T}\right)  $. Thus, by Theorem
%(\ref{thm_exis}) it has a unique solution $(Y,Z,K)\in S^{2}\times L^{2}\times
%H_{\nu}^{2}$.

\end{remark}

\subsection{Linear mean-field bsde}

In this section, we shall find the closed formula corresponding to the linear
mean-field bsde of the form%
\begin{equation}
\left\{
\begin{array}
[c]{ll}%
dY(t) & =-[\alpha_{1}(t)Y(t)+\beta_{1}(t)Z(t)+%
%TCIMACRO{\tint _{\mathbb{R}_{0}}}%
%BeginExpansion
{\textstyle\int_{\mathbb{R}_{0}}}
%EndExpansion
\eta_{1}(t,\zeta)K(t,\zeta)\nu(d\zeta)+\alpha_{2}(t)\mathbb{E[}Y(t)]\\
& +\beta_{2}(t)\mathbb{E[}Z(t)]+%
%TCIMACRO{\tint _{\mathbb{R}_{0}}}%
%BeginExpansion
{\textstyle\int_{\mathbb{R}_{0}}}
%EndExpansion
\eta_{2}(t,\zeta)\mathbb{E[}K(t,\zeta)]\nu(d\zeta)+\gamma(t)]dt\\
& +Z(t)dB(t)+%
%TCIMACRO{\tint _{\mathbb{R}_{0}}}%
%BeginExpansion
{\textstyle\int_{\mathbb{R}_{0}}}
%EndExpansion
K(t,\zeta)\tilde{N}(ds,d\zeta),t\in\left[  0,T\right]  ,\\
Y(T) & =\xi,
\end{array}
\right.  \label{lbsde}%
\end{equation}
where the coefficients $\alpha_{1}(t),\alpha_{2}(t),\beta_{1}(t),\beta
_{2}(t),\eta_{1}(t,\cdot),\eta_{2}(t,\cdot)$ are given deterministic
functions; $\gamma(t)$ is a given $\mathbb{F}$-adapted process and $\xi\in
L^{2}\left(  \Omega,\mathcal{F}_{T}\right)  $ is a given $\mathcal{F}_{T}$
measurable random variable. Applying a result from \O ksendal and Sulem
\cite{OS3} or Quenez and Sulem \cite{qs}), the above linear mean-field bsde
$(\ref{lbsde})$ can be written as follows.
\begin{equation}%
\begin{array}
[c]{c}%
Y(t)=\mathbb{E}^{\mathcal{F}_{t}}[\xi\Gamma(t,T)+%
%TCIMACRO{\tint _{t}^{T}}%
%BeginExpansion
{\textstyle\int_{t}^{T}}
%EndExpansion
\Gamma(t,s)\{\alpha_{2}(s)\mathbb{E[}Y(s)]+\beta_{2}(s)\mathbb{E[}Z(s)]\\
+%
%TCIMACRO{\tint _{\mathbb{R}_{0}}}%
%BeginExpansion
{\textstyle\int_{\mathbb{R}_{0}}}
%EndExpansion
\eta_{2}(t,\zeta)\mathbb{E[}K(t,\zeta)]\nu(d\zeta)+\gamma(s)\}ds],\quad
t\in\left[  0,T\right]  \,,
\end{array}
\label{cfbsde}%
\end{equation}
where $\Gamma(t,s)$ is the solution of the following linear sde
\begin{equation}
\left\{
\begin{array}
[c]{ll}%
d\Gamma(t,s) & =\Gamma(t,s^{-})[\alpha_{1}(t)dt+\beta_{1}(t)dB(t)+%
%TCIMACRO{\tint _{\mathbb{R}_{0}}}%
%BeginExpansion
{\textstyle\int_{\mathbb{R}_{0}}}
%EndExpansion
\eta_{1}(t,\zeta)K(t,\zeta)\widetilde{N}(dt,d\zeta)],\quad s\in\left[
t,T\right]  ,\\
\Gamma(t,t) & =1\,.
\end{array}
\right.  \label{gam}%
\end{equation}
%where $\eta^{1}(t,\zeta)K(t,\zeta)>-1.$
Since we are in one dimension, Equation $(\ref{gam})$ can be solved explicitly
and the solution is given by
\begin{equation}%
\begin{array}
[c]{c}%
\Gamma(t,s)=\exp\{%
%TCIMACRO{\tint _{t}^{s}}%
%BeginExpansion
{\textstyle\int_{t}^{s}}
%EndExpansion
\beta_{1}(r)dB(r)+%
%TCIMACRO{\tint _{t}^{s}}%
%BeginExpansion
{\textstyle\int_{t}^{s}}
%EndExpansion
(\alpha_{1}(r)-\tfrac{1}{2}(\beta_{1}(r))^{2})dr\\
\text{ \ \ \ \ \ \ \ \ \ \ \ \ \ \ \ \ \ }+%
%TCIMACRO{\tint _{t}^{s}}%
%BeginExpansion
{\textstyle\int_{t}^{s}}
%EndExpansion%
%TCIMACRO{\tint _{\mathbb{R}_{0}}}%
%BeginExpansion
{\textstyle\int_{\mathbb{R}_{0}}}
%EndExpansion
(\ln(1+\eta_{1}(r,\zeta))-\eta_{1}(r,\zeta))\nu(d\zeta)dr\\
\text{ \ \ \ \ }+%
%TCIMACRO{\tint _{t}^{s}}%
%BeginExpansion
{\textstyle\int_{t}^{s}}
%EndExpansion%
%TCIMACRO{\tint _{\mathbb{R}_{0}}}%
%BeginExpansion
{\textstyle\int_{\mathbb{R}_{0}}}
%EndExpansion
(\ln(1+\eta_{1}(r,\zeta))\widetilde{N}(dr,d\zeta)\}.
\end{array}
\label{gexp}%
\end{equation}
%We may remark that in the expression of $Y(t)$ in (\ref{cfbsde}), it depends
%on the expectations of $Y$, $Z$ and $K$. Let us now find the equations
%satisfied by the $\mathbb{E[}Y(t)],\mathbb{E[}Z(t)],\mathbb{E[}K(t,\zeta)]$
%respectively.
Notice that
\begin{equation}
\EE\Ga(t,s)=\exp\{%
%TCIMACRO{\tint _{t}^{s}}%
%BeginExpansion
{\textstyle\int_{t}^{s}}
%EndExpansion
\al_{1}(r)dr\}\,.
\end{equation}
To solve \eqref{cfbsde} we take the expectation on both sides of
$(\ref{cfbsde})$. Denoting $\overline{Y}(t):=\mathbb{E[}Y(t)],$ $\overline
{Z}(t):=\mathbb{E[}Z(t)]$, and $\overline{K}(t,\zeta):=\mathbb{E[}K(t,\zeta
)]$, we obtain%
\begin{equation}%
\begin{array}
[c]{c}%
\overline{Y}(t)=\mathbb{E}[\xi\Gamma(t,T)+%
%TCIMACRO{\tint _{t}^{T}}%
%BeginExpansion
{\textstyle\int_{t}^{T}}
%EndExpansion
\Gamma(t,s)\{\alpha_{2}(s)\overline{Y}(s)+\beta_{2}(s)\overline{Z}(s)\\
+%
%TCIMACRO{\tint _{\mathbb{R}_{0}}}%
%BeginExpansion
{\textstyle\int_{\mathbb{R}_{0}}}
%EndExpansion
\eta_{2}(t,\zeta)\overline{K}(t,\zeta)\nu(d\zeta)+\gamma(s)\}ds],t\in\left[
0,T\right]  .
\end{array}
\label{e.meany}%
\end{equation}
To find equations for $\overline{Z}(t)$ and $\overline{K}(t,\zeta)$ we write
the original equation \eqref{lbsde} as a forward one:
\[%
\begin{array}
[c]{c}%
Y(t)=Y(0)+%
%TCIMACRO{\tint _{0}^{t}}%
%BeginExpansion
{\textstyle\int_{0}^{t}}
%EndExpansion
[\alpha_{1}(s)Y(s)+\alpha_{2}(s)\overline{Y}(s)+\beta_{1}(s)Z(s)+\beta
_{2}(s)\overline{Z}(s)\\
+%
%TCIMACRO{\tint _{\mathbb{R}_{0}}}%
%BeginExpansion
{\textstyle\int_{\mathbb{R}_{0}}}
%EndExpansion
(\eta_{1}(t,\zeta)K(t,\zeta)+\eta_{2}(t,\zeta)\overline{K}(t,\zeta))\nu
(d\zeta)+\gamma(s)]ds\\
+%
%TCIMACRO{\tint _{0}^{t}}%
%BeginExpansion
{\textstyle\int_{0}^{t}}
%EndExpansion
Z(s)dB(s)+%
%TCIMACRO{\tint _{0}^{t}}%
%BeginExpansion
{\textstyle\int_{0}^{t}}
%EndExpansion%
%TCIMACRO{\tint _{\mathbb{R}_{0}}}%
%BeginExpansion
{\textstyle\int_{\mathbb{R}_{0}}}
%EndExpansion
K(s,\zeta)\tilde{N}(ds,d\zeta),\quad t\in\left[  0,T\right]  ,
\end{array}
\]
for some deterministic initial value $Y(0)$. Then using the properties stated in Example 2.2, we compute the Hida-Malliavin derivative of $Y(t)$ for all $r<t$ as follows:
\begin{align*}
D_{r}Y(t) &  =%
%TCIMACRO{\tint _{r}^{t}}%
%BeginExpansion
{\textstyle\int_{r}^{t}}
%EndExpansion
D_{r}[\alpha_{1}(s)Y(s)+\alpha_{2}(s)\overline{Y}(s)+\beta_{1}(s)Z(s)+\beta
_{2}(s)\overline{Z}(s)\\
&  +%
%TCIMACRO{\tint _{\mathbb{R}_{0}}}%
%BeginExpansion
{\textstyle\int_{\mathbb{R}_{0}}}
%EndExpansion
(\eta_{1}(s,\zeta)K(s,\zeta)+\eta_{2}(s,\zeta)\overline{K}(s,\zeta))\nu
(d\zeta)+\gamma(s)]ds\\
& +
{\textstyle\int_{r}^{t}}
D_r  Z(s)dB(s)+Z(r).
\end{align*}
Letting $r\rightarrow t-$, we get that $Z(t)=D_{t}Y(t).$ Thus, to find $Z(t)$
we only need to compute $D_{t}Y(t)$. We shall use the expression
\eqref{cfbsde} for $Y(t)$ and the identity
\[
D_{t}\EE^{\cf_{t}}[F]=\EE^{\cf_{t}}[D_{t}F]\,.
\]
We also notice that $D_{t}\Gamma(t,T)=\Gamma(t,T)\beta_{1}(t)$. Then
\begin{align*}
Z(t) &  =\mathbb{E}^{\mathcal{F}_{t}}[D_{t}\xi\Gamma(t,T)+\xi\Gamma
(t,T)\beta_{1}(t)+%
%TCIMACRO{\tint _{t}^{T}}%
%BeginExpansion
{\textstyle\int_{t}^{T}}
%EndExpansion
\Gamma(t,s)\beta_{1}(t)\{\alpha^{2}(s)\overline{Y}(s)\\
&  \qquad+\beta_{2}(s)\overline{Z}(s)+%
%TCIMACRO{\tint _{\mathbb{R}_{0}}}%
%BeginExpansion
{\textstyle\int_{\mathbb{R}_{0}}}
%EndExpansion
\eta_{2}(s,\zeta)\overline{K}(s,\zeta)\nu(d\zeta)+\gamma(s)\}ds]\,.
\end{align*}
Taking the expectation, we have
\begin{align}
\overline{Z}(t) &  =\mathbb{E}[D_{t}\xi\Gamma(t,T)+\beta_{1}(t)\mathbb{E(}%
\xi\Gamma(t,T))+%
%TCIMACRO{\tint _{t}^{T}}%
%BeginExpansion
{\textstyle\int_{t}^{T}}
%EndExpansion
\mathbb{E(}\Gamma(t,s))\beta_{1}(t)\{\alpha_{2}(s)\overline{Y}(s)\nonumber\\
&  \qquad+\beta_{2}(s)\overline{Z}(s)+%
%TCIMACRO{\tint _{\mathbb{R}_{0}}}%
%BeginExpansion
{\textstyle\int_{\mathbb{R}_{0}}}
%EndExpansion
\eta_{2}(s,\zeta)\overline{K}(s,\zeta)\nu(d\zeta)+\gamma
(s)\}ds].\label{e.meanz}%
\end{align}
Similarly, we have $K(t,\zeta)=D_{t,\zeta}Y(t)$ which yields
\begin{align*}
K(t,\zeta) &  =\mathbb{E}^{\mathcal{F}_{t}}[D_{t,\zeta}\xi\Gamma
(t,T)+\xi\Gamma(t,T)\eta_{1}(t,\zeta)+%
%TCIMACRO{\tint _{t}^{T}}%
%BeginExpansion
{\textstyle\int_{t}^{T}}
%EndExpansion
\Gamma(t,s)\eta_{1}(t,\zeta)\{\alpha_{2}(s)\overline{Y}(s)\\
&  \qquad+\beta_{2}(s)\overline{Z}(s)+%
%TCIMACRO{\tint _{\mathbb{R}_{0}}}%
%BeginExpansion
{\textstyle\int_{\mathbb{R}_{0}}}
%EndExpansion
\eta_{2}(s,\zeta)\overline{K}(s,\zeta)\nu(d\zeta)+\gamma(s)\}ds]\,.
\end{align*}
Taking the expectation yields
\begin{equation}
\overline{K}(t,\zeta)=\mathbb{E}[D_{t,\zeta}\xi\Gamma(t,T)+\xi\Gamma
(t,T)\eta_{1}(t,\zeta)+%
%TCIMACRO{\tint _{t}^{T}}%
%BeginExpansion
{\textstyle\int_{t}^{T}}
%EndExpansion
\eta_{2}(s,\zeta)\overline{K}(s,\zeta)\nu(d\zeta)+\gamma
(s)\}ds]\,.\label{e.meank}%
\end{equation}
Equations \eqref{e.meany}, \eqref{e.meanz} and \eqref{e.meank} can be used to
obtain $\bar{Y},\bar{Z},\bar{K}$. In fact, we let
\[
V(t)=\left(
\begin{array}
[c]{c}%
V_{1}(t)\\
V_{2}(t)\\
V_{3}(t,\zeta)
\end{array}
\right)  =\left(
\begin{array}
[c]{c}%
\overline{Y}(t)\\
\overline{Z}(t)\\
\overline{K}(t,\zeta)
\end{array}
\right)  \in L^{2}\times L^{2}\times H_{\nu}^{2},
\]
and
\begin{align}
A(t,s,\zeta) &  =\left(  A_{ij}(t,s,\zeta)\right)  _{1\leq i,j\leq3}%
\label{A}\\
&  =\left(
\begin{matrix}
\exp\{\int_{t}^{s}\al_{1}(r)dr\}\al_{2}(s) & \exp\{\int_{t}^{s}\al_{1}%
(r)dr\}\be_{2}(s) & \exp\{\int_{t}^{s}\al_{1}(r)dr\}\eta_{2}(s,\zeta)\\
\exp\{\int_{t}^{s}\al_{1}(r)dr\}\beta_{1}(t)\al_{2}(s) & \exp\{\int_{t}%
^{s}\al_{1}(r)dr\}\beta_{1}(t)\be_{2}(s) & \exp\{\int_{t}^{s}\al_{1}%
(r)dr\}\beta_{1}(t)\eta_{2}(s,\zeta)\\
\exp\{\int_{t}^{s}\al_{1}(r)dr\}\eta_{1}(t,\zeta)\al_{2}(s) & \exp\{\int
_{t}^{s}\al_{1}(r)dr\}\eta_{1}(t,\zeta)\be_{2}(s) & \exp\{\int_{t}^{s}%
\al_{1}(r)dr\}\eta_{1}(t,\zeta)\eta_{2}(s,\zeta)\
\end{matrix}
\right)  \,.\nonumber
\end{align}
Define a mapping $A=A^{T}$ from $V=(V_{1},V_{2},V_{3})^{T}\in L^{2}\times L^{2}\times
H_{\nu}^{2}$ to itself by
\begin{equation}
(AV)_{i}(t,\zeta)=
%TCIMACRO{\tsum _{j=1}^{2}}%
%BeginExpansion
{\textstyle\sum_{j=1}^{2}}
%EndExpansion%
%TCIMACRO{\tint _{t}^{T}}%
%BeginExpansion
{\textstyle\int_{t}^{T}}
%EndExpansion
A_{ij}(t,s)V_{j}(s)ds+%
%TCIMACRO{\tint _{t}^{T}}%
%BeginExpansion
{\textstyle\int_{t}^{T}}
%EndExpansion%
%TCIMACRO{\tint _{\RR_{0}}}%
%BeginExpansion
{\textstyle\int_{\RR_{0}}}
%EndExpansion
A_{i3}(t,s,\zeta)V_{3}(s,\zeta)\nu(d\zeta)\,ds.
\end{equation}
Then \eqref{e.meany}, \eqref{e.meanz} and \eqref{e.meank} can be
written as
\begin{equation}
V=F+AV\,,\label{e.v}%
\end{equation}
where
\begin{equation}
F(t,\zeta)=\left(
\begin{matrix}
\EE(\xi\Ga(t,T))+\int_{t}^{T}\ga(s)ds\\
\EE[D_{t}\xi\Ga(t,T)+\beta^{1}(t)\xi\Ga(t,T)]+\int_{t}^{T}\ga(s)ds\\
\EE[D_{t,\zeta}\xi\Ga(t,T)+\xi\Ga(t,T)\eta^{1}(t,\zeta)]+\int_{t}^{T}\ga(s)ds
\end{matrix}
\right)  \,.\label{F}%
\end{equation}
Note that the operator norm of $A$, $||A||$, is less than 1 if $t$ is close enough to $T$. Therefore there exists $\delta > 0$ such that $||A|| < 1$ if we restrict the operator to the interval $[T-\delta,T]$ for some $\delta >0$ small enough.
In this case the linear equation equation \eqref{e.v} can now be solved easily as follows:
\[
(I-A)V=F\,,
\]
or
\begin{equation}
V=(I-A)^{-1}F=%
%TCIMACRO{\tsum _{n=0}^{\infty}}%
%BeginExpansion
{\textstyle\sum_{n=0}^{\infty}}
%EndExpansion
A^{n}F\, ; \quad t \in [T-\delta,T].
\end{equation}
Next, using  $V(T-\delta)$ as the terminal value of the corresponding BSDE in the interval $[T-2\delta,T-\delta]$ and repeating the argument above, we find that there exists a solution $V$ of the BSDE in this interval, given by the equation
\begin{equation}
V(t,\zeta) = V(T-\delta,\zeta) + A ^{T-\delta}(t,\cdot,\zeta) V(\cdot) ;\quad T-2\delta \leq t \leq T-\delta.
\end{equation}
Proceeding by induction we end up with a solution on the whole interval $[0,T]$. We summarise this as follows:

%\begin{equation}
%V(t)=G(t)+%
%%TCIMACRO{\tint _{t}^{T}}%
%%BeginExpansion
%{\textstyle\int_{t}^{T}}
%%EndExpansion
%A(t,s_{1})V(s_{1})ds_{1}, \label{v_t}%
%\end{equation}
%where
%\begin{equation}
%V(s_{1})=G(s_{1})+%
%%TCIMACRO{\tint _{s_{1}}^{T}}%
%%BeginExpansion
%{\textstyle\int_{s_{1}}^{T}}
%%EndExpansion
%A(s_{1},s_{2})V(s_{2})ds_{2}, \label{v_s}%
%\end{equation}
%$G(t)\in%
%%TCIMACRO{\U{211d} }%
%%BeginExpansion
%\mathbb{R}
%%EndExpansion
%^{2}$ and $A(t,s_{1})\in%
%%TCIMACRO{\U{211d} }%
%%BeginExpansion
%\mathbb{R}
%%EndExpansion
%^{2}\times L_{\nu}^{2}\times%
%%TCIMACRO{\U{211d} }%
%%BeginExpansion
%\mathbb{R}
%%EndExpansion
%^{2}\times L_{\nu}^{2}$ is a matrix. Substituting the expression of
%(\ref{v_s}) into (\ref{v_t}), we get
%\[
%V(t)=G(t)+%
%%TCIMACRO{\tint _{t}^{T}}%
%%BeginExpansion
%{\textstyle\int_{t}^{T}}
%%EndExpansion
%A(t,s_{1})G(s_{1})ds_{1}+%
%%TCIMACRO{\tint _{t}^{T}}%
%%BeginExpansion
%{\textstyle\int_{t}^{T}}
%%EndExpansion%
%%TCIMACRO{\tint _{s_{1}}^{T}}%
%%BeginExpansion
%{\textstyle\int_{s_{1}}^{T}}
%%EndExpansion
%A(t,s_{1})A(s_{1},s_{2})V(s_{2})ds_{2}ds_{1}.
%\]
%We repeat this , we end up with Dyson-Phillips type expansion%
%\[
%V(t)=G(t)+%
%%TCIMACRO{\tsum _{n=0}^{\infty}}%
%%BeginExpansion
%{\textstyle\sum_{n=0}^{\infty}}
%%EndExpansion%
%%TCIMACRO{\tint _{t<s_{1}<...<s_{n}<T}}%
%%BeginExpansion
%{\textstyle\int_{t<s_{1}<...<s_{n}<T}}
%%EndExpansion
%A(s_{n},s_{n-1})...A(s_{2},s_{1})G(s_{1})ds_{1}...ds_{n}.
%\]

\begin{theorem}
[Closed formula] Assume that $\alpha_{1}(t),\alpha_{2}(t),\beta_{1}(t),\beta
_{2}(t),\eta_{1}(t,\cdot),\eta_{2}(t,\cdot)$ are given bounded deterministic functions
and that $\gamma(t)$ is $\mathbb{F}$-adapted and $\xi\in L^{2}\left(
\Omega,\mathcal{F}_{T}\right)  $. Then the component $Y(t)$ of the solution of the
linear mean-field bsde (\ref{lbsde}) can be written on its closed formula as
follows%
\begin{equation}
Y(t)=\mathbb{E}^{\mathcal{F}_{t}}[\xi\Gamma(t,T)+%
%TCIMACRO{\tint _{t}^{T}}%
%BeginExpansion
{\textstyle\int_{t}^{T}}
%EndExpansion
\Gamma(t,s)\{(\alpha_{2}(s),\beta_{2}(s),\eta_{2}(s,\zeta))V(s)+\gamma
(s)\}ds],t\in\left[  0,T\right]  ,\text{ }\mathbb{P}\text{-a.s.,}\label{cf_y}%
\end{equation}
where
\[%
\begin{array}
[c]{c}%
\Gamma(t,s)=\exp\{%
%TCIMACRO{\tint _{t}^{s}}%
%BeginExpansion
{\textstyle\int_{t}^{s}}
%EndExpansion
\beta_{1}(r)dB(r)+%
%TCIMACRO{\tint _{t}^{s}}%
%BeginExpansion
{\textstyle\int_{t}^{s}}
%EndExpansion
(\alpha_{1}(r)-\tfrac{1}{2}(\beta_{1}(r))^{2})dr\\
\text{ \ \ \ \ \ \ \ \ \ \ \ \ \ \ \ \ \ }+%
%TCIMACRO{\tint _{t}^{s}}%
%BeginExpansion
{\textstyle\int_{t}^{s}}
%EndExpansion%
%TCIMACRO{\tint _{\mathbb{R}_{0}}}%
%BeginExpansion
{\textstyle\int_{\mathbb{R}_{0}}}
%EndExpansion
(\ln(1+\eta_{1}(r,\zeta))-\eta_{1}(r,\zeta))\nu(d\zeta)dr\\
\text{ \ \ \ \ }+%
%TCIMACRO{\tint _{t}^{s}}%
%BeginExpansion
{\textstyle\int_{t}^{s}}
%EndExpansion%
%TCIMACRO{\tint _{\mathbb{R}_{0}}}%
%BeginExpansion
{\textstyle\int_{\mathbb{R}_{0}}}
%EndExpansion
(\ln(1+\eta_{1}(r,\zeta))\widetilde{N}(dr,d\zeta)\}.
\end{array}
\]
and, inductively,
\begin{equation}
V(t,\zeta) = V(T-k\delta,\zeta) + A ^{T-k\delta}(t,\cdot,\zeta) V(\cdot) ;\quad T-(k+1)\delta \leq t \leq T-k\delta; \quad k=0,1,2, ...
\end{equation}
Or, equivalently,
\begin{align}
V(t,\zeta)=
{\textstyle\sum_{n=0}^{\infty}}
(A ^{T-k\delta}(t,\cdot,\zeta))^{n}V(T-k\delta,\cdot) ;\quad T-(k+1)\delta \leq t \leq T-k\delta; \quad k=0,1,2, ...
\end{align}
where $A^{S}; S > 0$ is given by (\ref{A}) and $V(T,\zeta)=F$.
\end{theorem}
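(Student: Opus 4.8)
\emph{Approach.} The plan is to assemble the closed formula from the computations already displayed above, in three stages, and then to isolate the one nonroutine step. \emph{Stage 1: the adjoint representation.} First I would verify, via It\^o's formula applied to the Dol\'eans--Dade exponential, that the process on the right-hand side of \eqref{gexp} solves the linear sde \eqref{gam} with $\Gamma(t,t)=1$; here boundedness of $\beta_1$ and of $\ln(1+\eta_1)$ (in particular $\eta_1>-1$), together with $\eta_1(t,\cdot)\in L^2_\nu$, yields $\sup_{t\le s\le T}\mathbb{E}[\Gamma(t,s)^2]<\infty$. Applying the It\^o product rule to $s\mapsto\Gamma(t,s)Y(s)$ on $[t,T]$, with the mean-field terms $\alpha_2\overline Y+\beta_2\overline Z+\int_{\mathbb{R}_0}\eta_2\overline K\,\nu(d\zeta)+\gamma$ treated as an exogenous source, and taking $\mathbb{E}^{\mathcal{F}_t}[\cdot]$, reduces \eqref{lbsde} to the integral form \eqref{cfbsde}; this is exactly the linear-bsde representation of \O ksendal and Sulem \cite{OS3} and Quenez and Sulem \cite{qs}. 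Taking plain expectation in \eqref{cfbsde} gives \eqref{e.meany}.

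\emph{Stage 2: a closed linear system for $(\overline Y,\overline Z,\overline K)$.} Next I would rewrite the bsde as a forward equation and use Example 2.2(v) to identify $Z(t)=D_{t^-}Y(t)$ and $K(t,\zeta)=D_{t^-,\zeta}Y(t)$. Differentiating the representation \eqref{cfbsde} with the Hida--Malliavin calculus --- using $D_t\mathbb{E}^{\mathcal{F}_t}[F]=\mathbb{E}^{\mathcal{F}_t}[D_tF]$, the identities $D_t\Gamma(t,T)=\beta_1(t)\Gamma(t,T)$ and $D_{t,\zeta}\Gamma(t,T)=\eta_1(t,\zeta)\Gamma(t,T)$, and the chain rules of Example 2.2(i)--(ii) --- produces the displayed formulas for $Z(t)$ and $K(t,\zeta)$, and taking expectations gives \eqref{e.meanz} and \eqref{e.meank}. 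The Malliavin derivatives of $\xi$ enter only through $\mathbb{E}[D_{t^-}\xi\,\Gamma(t,T)]$ and $\mathbb{E}[D_{t^-,\zeta}\xi\,\Gamma(t,T)]$, which are finite by \eqref{eq2.10a}. Collecting \eqref{e.meany}, \eqref{e.meanz}, \eqref{e.meank} is precisely the fixed-point equation \eqref{e.v}, $V=F+AV$, with $A$ as in \eqref{A} and $F$ as in \eqref{F}; since $\xi\in L^2$ and all coefficients are bounded, $F\in L^2\times L^2\times H_\nu^2$.

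\emph{Stage 3: solving $V=F+AV$ by Neumann series and induction.} Let $M$ bound $|\alpha_i|$, $|\beta_i|$ and $\|\eta_i(t,\cdot)\|_{L^2_\nu}$. On a subinterval $[T-\delta,T]$ each term of $(AV)_i$ is an integral of a bounded kernel over a time window of length $\le\delta$, so estimating by Cauchy--Schwarz in $s$ (and, for the third column, additionally in $\zeta$ against $\nu$) gives $\|AV\|\le C_M\,\delta\,\|V\|$ in the natural norm on $L^2([T-\delta,T])\times L^2([T-\delta,T])\times H_\nu^2([T-\delta,T])$, with $C_M$ depending only on $M$ and $T$. Choosing $\delta$ with $C_M\delta<1$ makes $A$ a strict contraction there, so $I-A$ is boundedly invertible and $V=\sum_{n\ge0}A^nF$ on $[T-\delta,T]$; in particular $V(T-\delta,\cdot)$ becomes known. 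Restarting at $T-\delta$ with this as terminal value and repeating on $[T-2\delta,T-\delta]$, and so on, after $\lceil T/\delta\rceil$ steps yields $V$ on all of $[0,T]$ in the inductive form stated. Finally, substituting the resulting $\overline Y,\overline Z,\overline K$ into \eqref{cfbsde} produces the closed formula \eqref{cf_y}.

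\emph{Main obstacle.} The hard part will be the contraction estimate $\|A\|<1$ on short intervals: one must verify that integrating the bounded kernel over a time window of length $\delta$ genuinely contributes a factor $O(\delta)$ in the relevant $L^2$-type norm and, in particular, control the jump term $\int_{\mathbb{R}_0}A_{i3}(t,s,\zeta)V_3(s,\zeta)\,\nu(d\zeta)$ via Cauchy--Schwarz in $\nu$ using $\eta_2(s,\cdot)\in L^2_\nu$. The remaining ingredients --- the It\^o computation of Stage 1, the Hida--Malliavin differentiations of Stage 2, and the finite induction of Stage 3 --- are routine bookkeeping.
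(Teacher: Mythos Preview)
Your proposal is correct and follows essentially the same route as the paper: the adjoint representation \eqref{cfbsde} via the Dol\'eans--Dade exponential $\Gamma$, the identification $Z=D_{t^-}Y$, $K=D_{t^-,\zeta}Y$ and the Hida--Malliavin differentiation of \eqref{cfbsde} to obtain the closed system $V=F+AV$, and finally the Neumann-series solution on short intervals propagated by induction. Your treatment of the contraction estimate for $A$ is in fact more detailed than the paper's, which simply asserts $\|A\|<1$ for $t$ close to $T$ without spelling out the $O(\delta)$ bound.
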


\section{A comparison theorem for mean-field bsde's}

In this section we are interested in a subclass of mean-field bsde. Our idea
is to use Picard iteration. So first, we shall prove a convergence result for
the Picard iteration.

\subsection{Picard iteration}

To be able to prove the comparison theorem for mean-field bsde, we consider a
mean field bsde and with driver allowed only to depend on the expectation of
$Y(t)$ and independent of the expectations of $Z(t)$ and $K(t,\zeta)$, as follows%

\begin{equation}
\left\{
\begin{array}
[c]{ll}%
dY(t) & =-g(t,Y(t),Z(t),K(t,\cdot),\mathbb{E[}Y(t)\mathbb{]})dt+Z(t)dB(t)\\
& \text{ \ \ \ \ \ \ \ \ \ \ }+%
%TCIMACRO{\tint _{\mathbb{R}_{0}}}%
%BeginExpansion
{\textstyle\int_{\mathbb{R}_{0}}}
%EndExpansion
K(t,\zeta)\tilde{N}(dt,d\zeta),t\in\left[  0,T\right]  ,\\
Y(T) & =\xi.
\end{array}
\right.  \label{p_bsde}%
\end{equation}
We impose the following set of assumptions.

\begin{assumption}\label{a.4.1}
\begin{itemize}
\item[(i)] Here $g:\Omega\times\lbrack0,T]\times\mathcal{%
%TCIMACRO{\U{211d} }%
%BeginExpansion
\mathbb{R}
%EndExpansion
}^{2}\times L_{\nu}^{2}\times\mathcal{%
%TCIMACRO{\U{211d} }%
%BeginExpansion
\mathbb{R}
%EndExpansion
}\rightarrow\mathcal{%
%TCIMACRO{\U{211d} }%
%BeginExpansion
\mathbb{R}
%EndExpansion
}$ is $\mathbb{F}$-adapted and satisfies the Lipschitz assumption in the sense
that%
\begin{align*}
|g(t,y,z,k,\overline{y})-g(t,y^{\prime},z^{\prime},k^{\prime},\overline
{y}^{\prime})|     \leq C(|y-y^{\prime}|+|z-z^{\prime}|  +
\|k-k'\|_{L^2(\nu)}+|
\overline{y}-\overline{y}^{\prime}|),
\end{align*}
for all $y,z,\overline{y},y^{\prime},z^{\prime},\overline{y}^{\prime}
\in
\mathbb{R} ,k,k^{\prime}\in L_{\nu}^{2}$.
\item[(ii)]
\[
\mathbb{E}[  \int_{0}^{T}
|g(t,0,0,0,0)|^{2}dt]<\infty\,.
\]
\item[(iii)] The terminal value $\xi\in L^{2}\left(  \Omega,\mathcal{F}_{T}\right)  $.
\end{itemize}
\end{assumption}
\vskip 0.2cm
The following result is a consequence of Theorem ${3.3}$ with ${d=1}$ and
$\varphi(x)=x:$

\begin{theorem}
Under the above Assumption 4.1, the mean-field bsde
(\ref{p_bsde}) admits a unique solution $(Y,Z,K)\in S^{2}\times L^{2}\times
H_{\nu}^{2}.$
\end{theorem}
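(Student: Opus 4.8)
The plan is to obtain the statement as a direct corollary of Theorem~\ref{thm_exis}, specialized to $d=1$ and $\varphi(y,z,k)=y$ (so that $\varphi$ depends on neither $z$ nor $k$). With this choice the general mean-field driver $f(t,y,z,k,\mathbb{E}[\varphi(y,z,k)])$ of \eqref{b1add} becomes $g(t,y,z,k,\mathbb{E}[y])$, which is exactly the driver in \eqref{p_bsde}; and since $Y\in S^{2}\subset L^{2}$, the mean $\mathbb{E}[\varphi(Y(t),Z(t),K(t,\cdot))]=\mathbb{E}[Y(t)]$ is well defined, so \eqref{p_bsde} is a genuine instance of \eqref{b1add}. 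Thus it suffices to check that Assumption~\ref{a.4.1}, together with this choice of $\varphi$, implies Assumption~\ref{a.3.1}.

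I would first verify Assumption~\ref{a.3.1}(c). For $\varphi(y,z,k)=y$ we have $\partial\varphi/\partial y\equiv 1$, $\partial\varphi/\partial z\equiv 0$ and $\nabla_{k}\varphi\equiv 0$, hence $\varphi\in C^{1}$ and
\[
\Big|\tfrac{\partial\varphi}{\partial y}(y,z,k)\Big|+\Big|\tfrac{\partial\varphi}{\partial z}(y,z,k)\Big|+\|\nabla_{k}\varphi(y,z,k)\|_{L_{\nu}^{2}}=1,
\]
so (c) holds with constant $1$. The remaining conditions match up directly: Assumption~\ref{a.4.1}(i) is the Lipschitz bound (b) of Assumption~\ref{a.3.1} when the mean argument $\mu\in\mathbb{R}^{d}$ is taken to be the scalar $\overline{y}$; Assumption~\ref{a.4.1}(ii) is the integrability condition (a), since $f(t,0,0,0,0)=g(t,0,0,0,\mathbb{E}[\varphi(0,0,0)])=g(t,0,0,0,0)$; and Assumption~\ref{a.4.1}(iii) is precisely $\xi\in L^{2}(\Omega,\mathcal{F}_{T})$.

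All hypotheses of Theorem~\ref{thm_exis} being in force, that theorem provides a unique solution triplet, which by the definition of a solution triplet lies in $S^{2}\times L^{2}\times H_{\nu}^{2}$; this is the claim. There is essentially no obstacle here: the one remark worth making is that a driver coupled to the law only through $\mathbb{E}[Y(t)]$ (and not through $\mathbb{E}[Z(t)]$ or $\mathbb{E}[K(t,\zeta)]$) is still covered, via the degenerate choice of $\varphi$ above. If one preferred a self-contained argument, the contraction-mapping proof of Theorem~\ref{thm_exis} on $(\mathbb{H}_{\beta},\|\cdot\|_{\beta})$ with $\beta=1+12\overline{C}^{2}$ goes through verbatim — indeed more easily, the mean entering only in the $Y$-component — but reducing to Theorem~\ref{thm_exis} is cleaner.
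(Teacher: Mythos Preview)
Your proof is correct and follows exactly the approach in the paper, which simply states that the result is a consequence of Theorem~\ref{thm_exis} with $d=1$ and $\varphi(x)=x$. Your verification that Assumption~\ref{a.4.1} implies Assumption~\ref{a.3.1} under this choice of $\varphi$ is precisely the content that the paper leaves implicit.
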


To prove a comparison theorem, we need the following
convergence to hold:

\begin{lemma}
[Convergence]\label{conv} Let $(Y,Z,K)\in S^{2}\times L^{2}\times H_{\nu}^{2}$
satisfies the mean-field bsde
\begin{equation}%
\begin{array}
[c]{c}%
Y(t)=\xi+%
%TCIMACRO{\tint _{t}^{T}}%
%BeginExpansion
{\textstyle\int_{t}^{T}}
%EndExpansion
g(s,Y(s),Z(s),K\left(  s,\cdot\right)  ,\mathbb{E[}Y(s)\mathbb{]})ds-%
%TCIMACRO{\tint _{t}^{T}}%
%BeginExpansion
{\textstyle\int_{t}^{T}}
%EndExpansion
Z(s)dB(s)\\
-%
%TCIMACRO{\tint _{t}^{T}}%
%BeginExpansion
{\textstyle\int_{t}^{T}}
%EndExpansion%
%TCIMACRO{\tint _{\mathbb{R}_{0}}}%
%BeginExpansion
{\textstyle\int_{\mathbb{R}_{0}}}
%EndExpansion
K(s,\zeta)\tilde{N}(ds,d\zeta),t\in\left[  0,T\right]  ,
\end{array}
\label{y}%
\end{equation}
where $\xi$ and $g$ are supposed to satisfy Assumption 4.1. We assume that
for all $n\geq 1,$ the triplet $(Y^{n},Z^{n},K^{n})$ satisfies%
\begin{equation}%
\begin{array}
[c]{c}%
Y^{n}(t)=\xi+\int_{t}^{T}g(s,Y^{n}(s),Z^{n}(s),K^{n}(s,\cdot),\mathbb{E}%
[Y^{n-1}(s)])ds-\int_{t}^{T}Z^{n}(s)dB(s)\\
-%
%TCIMACRO{\tint _{t}^{T}}%
%BeginExpansion
{\textstyle\int_{t}^{T}}
%EndExpansion%
%TCIMACRO{\tint _{\mathbb{R}_{0}}}%
%BeginExpansion
{\textstyle\int_{\mathbb{R}_{0}}}
%EndExpansion
K^{n}(s,\zeta)\tilde{N}(ds,d\zeta),t\in\left[  0,T\right]  ,
\end{array}
\label{yn}%
\end{equation}
where $Y^{n-1}(s)$ is known. Thus, the following convergence holds%
\[
Y^{n}(t)\rightarrow Y(t)\text{, for each }t\in\left[  0,T\right]  \text{.}%
\]

\end{lemma}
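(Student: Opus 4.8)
The plan is to estimate the differences $\widetilde{Y}^{n}:=Y^{n}-Y$, $\widetilde{Z}^{n}:=Z^{n}-Z$, $\widetilde{K}^{n}:=K^{n}-K$ by an \Itos energy identity and then iterate the resulting integral inequality. Subtracting \eqref{y} from \eqref{yn}, the triplet $(\widetilde{Y}^{n},\widetilde{Z}^{n},\widetilde{K}^{n})$ solves a bsde with terminal value $0$ and generator
\[
\Delta^{n}(s):=g(s,Y^{n}(s),Z^{n}(s),K^{n}(s,\cdot),\mathbb{E}[Y^{n-1}(s)])-g(s,Y(s),Z(s),K(s,\cdot),\mathbb{E}[Y(s)]).
\]
First I would apply \Itos formula to $|\widetilde{Y}^{n}(s)|^{2}$ on $[t,T]$ and take expectations; since all processes involved lie in $S^{2}\times L^{2}\times H_{\nu}^{2}$, the $dB$- and $\widetilde{N}$-stochastic integrals are genuine martingales and drop out, leaving
\[
\mathbb{E}[|\widetilde{Y}^{n}(t)|^{2}]+\mathbb{E}\!\int_{t}^{T}\!|\widetilde{Z}^{n}(s)|^{2}ds+\mathbb{E}\!\int_{t}^{T}\!\!\int_{\mathbb{R}_{0}}\!|\widetilde{K}^{n}(s,\zeta)|^{2}\nu(d\zeta)\,ds=2\,\mathbb{E}\!\int_{t}^{T}\!\widetilde{Y}^{n}(s)\,\Delta^{n}(s)\,ds.
\]

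Next I would bound the right-hand side. Assumption \ref{a.4.1}(i) gives $|\Delta^{n}(s)|\le C(|\widetilde{Y}^{n}(s)|+|\widetilde{Z}^{n}(s)|+\|\widetilde{K}^{n}(s,\cdot)\|_{L^{2}(\nu)}+|\mathbb{E}[Y^{n-1}(s)]-\mathbb{E}[Y(s)]|)$, and Jensen's inequality gives $|\mathbb{E}[Y^{n-1}(s)]-\mathbb{E}[Y(s)]|^{2}\le\mathbb{E}[|\widetilde{Y}^{n-1}(s)|^{2}]$. Splitting each of the four cross terms in $2\widetilde{Y}^{n}\Delta^{n}$ with Young's inequality $2ab\le\varepsilon a^{2}+\varepsilon^{-1}b^{2}$, and choosing $\varepsilon$ small enough on the terms carrying $|\widetilde{Z}^{n}|$ and $\|\widetilde{K}^{n}\|$ so that they are absorbed into the left-hand side, one obtains with $v_{n}(t):=\mathbb{E}[|\widetilde{Y}^{n}(t)|^{2}]$ and constants $\alpha,C'$ depending only on $C$:
\[
v_{n}(t)\le\alpha\int_{t}^{T}v_{n}(s)\,ds+C'\int_{t}^{T}v_{n-1}(s)\,ds,\qquad t\in[0,T].
\]
The backward Gronwall inequality then yields $v_{n}(t)\le D\int_{t}^{T}v_{n-1}(s)\,ds$ with $D:=C'e^{\alpha T}$.

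Finally I would iterate this last inequality $n$ times over the simplex $\{t\le s_{1}\le\cdots\le s_{n}\le T\}$, getting
\[
v_{n}(t)\le D^{n}\!\int_{t}^{T}\!\!\int_{s_{1}}^{T}\!\!\cdots\!\int_{s_{n-1}}^{T}\!v_{0}(s_{n})\,ds_{n}\cdots ds_{1}\le\Big(\sup_{s\in[0,T]}v_{0}(s)\Big)\frac{(D(T-t))^{n}}{n!},
\]
and the right-hand side tends to $0$ as $n\to\infty$, since $v_{0}=\mathbb{E}[|Y^{0}-Y|^{2}]$ is bounded on $[0,T]$ (both $Y^{0}$ and $Y$ lie in $S^{2}$). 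Hence $\mathbb{E}[|Y^{n}(t)-Y(t)|^{2}]\to0$ for every $t\in[0,T]$, which is the asserted convergence; summing the displayed inequality over $n$ also gives $Z^{n}\to Z$ in $L^{2}$ and $K^{n}\to K$ in $H_{\nu}^{2}$, which is convenient for the comparison theorem to follow.

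The step I expect to need the most care is the extraction of the $1/n!$ factor: a crude bound such as $v_{n}(t)\le D(T-t)\sup_{s}v_{n-1}(s)$ only produces geometric growth $(DT)^{n}$, which does not converge unless $T$ is small, so one must keep the full nested time-integral structure so that the volume of the $n$-simplex supplies the factorial. A minor additional point is to note that the iterates $(Y^{n},Z^{n},K^{n})$ are genuinely well defined in $S^{2}\times L^{2}\times H_{\nu}^{2}$: for fixed $n$, \eqref{yn} is an ordinary (non mean-field) bsde with Lipschitz generator $(s,y,z,k)\mapsto g(s,y,z,k,\mathbb{E}[Y^{n-1}(s)])$ and square-integrable data, so classical existence and uniqueness applies; but as the lemma posits these triplets as given, this is only a remark.
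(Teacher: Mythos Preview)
Your proof is correct and follows the same overall scheme as the paper: apply \Itos formula to the squared difference, use the Lipschitz assumption on $g$ together with Young's inequality to control the generator term, and iterate the resulting integral inequality so that the volume of the $n$-simplex produces the $1/n!$ decay. The one difference is that you compare $Y^{n}$ directly with the given solution $Y$ and set $v_{n}(t)=\mathbb{E}[|Y^{n}(t)-Y(t)|^{2}]$, whereas the paper compares \emph{successive} iterates $Y^{n+1}-Y^{n}$, shows the sequence is Cauchy via the same factorial bound, and then identifies the limit with a solution of \eqref{y}. Since the lemma already posits $(Y,Z,K)$ as a solution, your direct route is slightly cleaner; the paper's Cauchy argument has the incidental bonus of reproving existence, but that is not needed here. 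Your explicit use of backward Gronwall to remove the $\int_{t}^{T}v_{n}(s)\,ds$ term before iterating is also a bit more transparent than the paper's differential-inequality manipulation, though the two are equivalent.
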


\noindent{Proof.} \quad\ The proof relies on the classical Picard iteration
method.\newline Define $Y^{0}(t)=\mathbb{E}[\xi]$ and $Y^{n}(t)$ given by
(\ref{yn}) inductively as follows:%
\[%
\begin{array}
[c]{c}%
Y^{n+1}(t)=\xi+%
%TCIMACRO{\tint _{t}^{T}}%
%BeginExpansion
{\textstyle\int_{t}^{T}}
%EndExpansion
g(s,Y^{n+1}(s),Z^{n+1}(s),K^{n+1}(s,\cdot),\mathbb{E}[Y^{n}(s)])ds-%
%TCIMACRO{\tint _{t}^{T}}%
%BeginExpansion
{\textstyle\int_{t}^{T}}
%EndExpansion
Z^{n+1}(s)dB(s)\\
-%
%TCIMACRO{\tint _{t}^{T}}%
%BeginExpansion
{\textstyle\int_{t}^{T}}
%EndExpansion%
%TCIMACRO{\tint _{\mathbb{R}_{0}}}%
%BeginExpansion
{\textstyle\int_{\mathbb{R}_{0}}}
%EndExpansion
K^{n+1}(s,\zeta)\tilde{N}(ds,d\zeta),t\in\left[  0,T\right]  .
\end{array}
\]
We want to show that the sequence $(Y^{n}(t))_{t\geq0}$ forms a Cauchy
sequence. By It\^{o}'s formula, we have%
\[%
\begin{array}
[c]{l}%
\mathbb{E}[|Y^{n+1}(t)-Y^{n}(t)|^{2}]+\tfrac{1}{2}\mathbb{E}[%
%TCIMACRO{\tint _{t}^{T}}%
%BeginExpansion
{\textstyle\int_{t}^{T}}
%EndExpansion
|Y^{n+1}(s)-Y^{n}(s)|^{2}ds]+\tfrac{1}{2}\mathbb{E}[%
%TCIMACRO{\tint _{t}^{T}}%
%BeginExpansion
{\textstyle\int_{t}^{T}}
%EndExpansion
|Z^{n+1}(s)-Z^{n}(s)|^{2}ds]\\
\text{ \ \ \ \ \ \ \ \ \ \ \ \ \ \ \ \ \ \ \ \ }+\tfrac{1}{2}\mathbb{E}%
%TCIMACRO{\tint _{t}^{T}}%
%BeginExpansion
{\textstyle\int_{t}^{T}}
%EndExpansion%
%TCIMACRO{\tint _{\mathbb{R}_{0}}}%
%BeginExpansion
{\textstyle\int_{\mathbb{R}_{0}}}
%EndExpansion
[|K^{n+1}(s,\zeta)-K^{n}(s,\zeta)|^{2}]\nu(d\zeta)ds\\
\leq C\mathbb{E}[\int_{t}^{T}|Y^{n+1}(s)-Y^{n}(s)|^{2}ds]+\mathbb{E}[%
%TCIMACRO{\tint _{t}^{T}}%
%BeginExpansion
{\textstyle\int_{t}^{T}}
%EndExpansion
|Y^{n}(s)-Y^{n-1}(s)|^{2}ds].
\end{array}
\]
This implies%
\[
-\tfrac{d}{dt}(e^{Ct}\mathbb{E}[|Y^{n+1}(t)-Y^{n}(t)|^{2}])\leq\tfrac{1}%
{2}\mathbb{E}[%
%TCIMACRO{\tint _{t}^{T}}%
%BeginExpansion
{\textstyle\int_{t}^{T}}
%EndExpansion
|Y^{n}(s)-Y^{n-1}(s)|^{2}ds].
\]
Integrating both sides from $u$ to $T$, yields%
\begin{align*}%
%TCIMACRO{\tint _{u}^{T}}%
%BeginExpansion
{\textstyle\int_{u}^{T}}
%EndExpansion
\mathbb{E}[|Y^{n+1}(t)-Y^{n}(t)|^{2}])dt  &  \leq\tfrac{1}{2}%
%TCIMACRO{\tint _{t}^{T}}%
%BeginExpansion
{\textstyle\int_{t}^{T}}
%EndExpansion
dte^{C(t-u)}\mathbb{E}[%
%TCIMACRO{\tint _{t}^{T}}%
%BeginExpansion
{\textstyle\int_{t}^{T}}
%EndExpansion
|Y^{n}(s)-Y^{n-1}(s)|^{2}ds]\\
&  \leq e^{CT}%
%TCIMACRO{\tint _{t}^{T}}%
%BeginExpansion
{\textstyle\int_{t}^{T}}
%EndExpansion
dt\mathbb{E}[%
%TCIMACRO{\tint _{t}^{T}}%
%BeginExpansion
{\textstyle\int_{t}^{T}}
%EndExpansion
|Y^{n}(s)-Y^{n-1}(s)|^{2}ds].
\end{align*}
By induction on $n$, we get%
\[
\mathbb{E}[%
%TCIMACRO{\tint _{0}^{T}}%
%BeginExpansion
{\textstyle\int_{0}^{T}}
%EndExpansion
|Y^{n+1}(t)-Y^{n}(t)|^{2}dt])\leq\tfrac{e^{CnT}T^{n}}{n!}.
\]
We conclude that there exists a unique $\mathbb{F}$-adapted process $Y(t)$
such that $Y^{n}(t)$ converges to $Y(t)$ which satisfies equation
(\ref{y}).$\qquad\qquad\square$ \newline\newline We are now ready to state and
prove a comparison theorem for mean-field bsde .

\begin{theorem}
[Comparison Theorem]\label{comp} Let $g_{1},g_{2}:\Omega\times\lbrack
0,T]\times\mathcal{%
%TCIMACRO{\U{211d} }%
%BeginExpansion
\mathbb{R}
%EndExpansion
}^{2}\times L_{\nu}^{2}\times\mathcal{%
%TCIMACRO{\U{211d} }%
%BeginExpansion
\mathbb{R}
%EndExpansion
}$ and $\xi_{1},\xi_{2}\in L^{2}\left(  \Omega,\mathcal{F}_{T}\right)  $ and
let $(Y_{i},Z_{i},K_{i})_{i=1,2}$ be the solutions of the following mean-field
bsde's
\[%
\begin{array}
[c]{c}%
Y_{i}(t)=\xi_{i}+%
%TCIMACRO{\tint _{t}^{T}}%
%BeginExpansion
{\textstyle\int_{t}^{T}}
%EndExpansion
g_{i}\left(  s,Y_{i}\left(  s\right)  ,Z_{i}\left(  s\right)  ,K_{i}\left(
s,\cdot\right)  ,\mathbb{E}[Y_{i}\left(  s\right)  ]\right)  ds-%
%TCIMACRO{\tint _{t}^{T}}%
%BeginExpansion
{\textstyle\int_{t}^{T}}
%EndExpansion
Z_{i}(s)dB(s)\\
-%
%TCIMACRO{\tint _{t}^{T}}%
%BeginExpansion
{\textstyle\int_{t}^{T}}
%EndExpansion%
%TCIMACRO{\tint _{\mathbb{R}_{0}}}%
%BeginExpansion
{\textstyle\int_{\mathbb{R}_{0}}}
%EndExpansion
K_{i}(s,\zeta)\tilde{N}(ds,d\zeta),t\in\left[  0,T\right]  .
\end{array}
\]

\begin{equation}
\xi_{1}\geq\xi_{2}\text{ }\mathbb{P}\text{-a.s.} \label{xi_est}%
\end{equation}

Assume that the drivers $(g_{i})_{i=1,2}$ are given $\mathbb{F}$-predictable
processes satisfying Assumption \ref{a.4.1} and
\begin{equation}
g_{1}\left(  t,y_{1},z_{1},k_{1} ,\overline{y}_{1}\right)  \geq g_{2}\left(
t,y_{1},z_{1},k_{1} ,\overline{y}_{2}\right)  ,\forall t\text{, }\overline
{y}_{1}\geq\overline{y}_{2},\mathbb{P}\text{-a.s.,} \label{g_est}%
\end{equation}
and moreover, the following inequality holds
\begin{equation}
g_{2}(t,y,z,k_{1} ,\mathbb{E}[Y(t)])-g_{2}(t,y,z,k_{2} ,\mathbb{E}[Y(t)]) \geq%
%TCIMACRO{\tint _{\mathcal{\mathbb{R}}_{0}}}%
%BeginExpansion
{\textstyle\int_{\mathcal{\mathbb{R}}_{0}}}
%EndExpansion
\eta^{1}(t,\zeta)(k_{1}(\zeta)-k_{2}(\zeta))\nu(d\zeta), \label{jump_est}%
\end{equation}
$\mathbb{P}$-a.s. for all $t$.
\newline

Then $Y_{1}(t)\geq Y_{2}(t)$ $\mathbb{P}$-a.s. for each $t$.\newline
\end{theorem}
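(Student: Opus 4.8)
The plan is to combine the Picard-iteration convergence result (Lemma~\ref{conv}) with the classical comparison theorem for (non-mean-field) BSDEs driven by Brownian motion and a Poisson random measure; the latter is available precisely under a condition of the type \eqref{jump_est} (compare Barles \textit{et al.} \cite{BBP} and Royer \cite{R}). First I would set up the Picard sequences for both equations: define $Y_{i}^{0}(t)=\mathbb{E}[\xi_{i}]$ (so that $Y_{1}^{0}\geq Y_{2}^{0}$ since $\xi_{1}\geq\xi_{2}$) and, inductively, let $(Y_{i}^{n},Z_{i}^{n},K_{i}^{n})$ solve the standard BSDE with driver $(s,y,z,k)\mapsto g_{i}(s,y,z,k,\mathbb{E}[Y_{i}^{n-1}(s)])$ and terminal value $\xi_{i}$. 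By Lemma~\ref{conv}, $Y_{i}^{n}(t)\to Y_{i}(t)$ for each $t$, so it suffices to prove $Y_{1}^{n}(t)\geq Y_{2}^{n}(t)$ for all $n$ and then pass to the limit.

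The inductive step is the heart of the argument. Assume $Y_{1}^{n-1}(s)\geq Y_{2}^{n-1}(s)$ for all $s$, hence $\mathbb{E}[Y_{1}^{n-1}(s)]\geq\mathbb{E}[Y_{2}^{n-1}(s)]=:\overline{y}_{1}(s)\geq\overline{y}_{2}(s)$. Now $Y_{1}^{n}$ and $Y_{2}^{n}$ both solve \emph{ordinary} BSDEs (the mean term is now a frozen deterministic function of $s$), with drivers $\tilde g_{1}(s,y,z,k):=g_{1}(s,y,z,k,\overline{y}_{1}(s))$ and $\tilde g_{2}(s,y,z,k):=g_{2}(s,y,z,k,\overline{y}_{2}(s))$. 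Using \eqref{g_est} at $\overline{y}_{1}(s)\geq\overline{y}_{2}(s)$ we get $\tilde g_{1}(s,y,z,k)\geq g_{2}(s,y,z,k,\overline{y}_{1}(s))\geq g_{2}(s,y,z,k,\overline{y}_{2}(s))=\tilde g_{2}(s,y,z,k)$ for every $(y,z,k)$; so the driver of the first equation dominates that of the second pointwise. Together with $\xi_{1}\geq\xi_{2}$ and the $\gamma$-type jump condition \eqref{jump_est} on $g_{2}$ (which is exactly the structural hypothesis guaranteeing monotonicity of the solution map in the jump component for BSDEs with jumps), the classical comparison theorem for BSDEs with jumps yields $Y_{1}^{n}(t)\geq Y_{2}^{n}(t)$ for all $t$, closing the induction.

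To make the classical comparison step self-contained one can run the standard linearization: write the difference $\widehat{Y}(t)=Y_{1}^{n}(t)-Y_{2}^{n}(t)$, express the difference of the drivers as $a(s)\widehat{Y}(s)+b(s)\widehat{Z}(s)+\int_{\mathbb{R}_{0}}c(s,\zeta)\widehat{K}(s,\zeta)\nu(d\zeta)+ \Delta g(s)$ with bounded $\mathbb{F}$-predictable coefficients $a,b$ coming from the Lipschitz assumption \ref{a.4.1}, and with $c(s,\zeta)\geq \eta^{1}(s,\zeta) > -1$ precisely because of \eqref{jump_est} (this sign/lower-bound on $c$ is what makes the stochastic exponential a genuine positive density); the remainder $\Delta g(s)=\tilde g_{1}(s,Y_{2}^{n}(s),Z_{2}^{n}(s),K_{2}^{n}(s,\cdot))-\tilde g_{2}(s,Y_{2}^{n}(s),Z_{2}^{n}(s),K_{2}^{n}(s,\cdot))\geq 0$ by the previous paragraph. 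Multiplying by the adjoint exponential process $\Gamma(t,s)$ (as in \eqref{gexp}, with $\alpha_{1}=a$, $\beta_{1}=b$, $\eta_{1}=c$) and taking $\mathbb{E}^{\mathcal{F}_{t}}$ gives $\widehat{Y}(t)=\mathbb{E}^{\mathcal{F}_{t}}[\Gamma(t,T)\,\widehat{\xi}+\int_{t}^{T}\Gamma(t,s)\Delta g(s)\,ds]\geq 0$. Finally, passing $n\to\infty$ in $Y_{1}^{n}(t)\geq Y_{2}^{n}(t)$ using Lemma~\ref{conv} gives $Y_{1}(t)\geq Y_{2}(t)$ $\mathbb{P}$-a.s.\ for each $t$.

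The main obstacle I anticipate is the jump term: controlling the sign of the linearization coefficient $c(s,\zeta)$ and ensuring $\Gamma(t,s)\geq 0$ is exactly where naive arguments fail for BSDEs with jumps, and it is the reason hypothesis \eqref{jump_est} is imposed on $g_{2}$ rather than merely Lipschitz continuity being assumed. A secondary technical point is the interchange of the limit $n\to\infty$ with the (a.s., pointwise in $t$) inequality, but this is immediate once Lemma~\ref{conv} gives convergence for each fixed $t$.
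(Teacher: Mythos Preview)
Your proposal is correct and follows essentially the same Picard-iteration strategy as the paper: initialize with $Y_i^0=\mathbb{E}[\xi_i]$, freeze the mean term at each step to reduce to a classical BSDE with jumps, invoke Royer's comparison theorem (using \eqref{jump_est}) for the inductive step, and pass to the limit via Lemma~\ref{conv}; the additional linearization sketch you include is just an expansion of the cited result. One minor quibble: your chain $\tilde g_{1}\geq g_{2}(\cdot,\overline{y}_{1})\geq g_{2}(\cdot,\overline{y}_{2})$ tacitly assumes $g_{2}$ is nondecreasing in the mean argument, which is not hypothesized---but this is unnecessary, since \eqref{g_est} directly yields $\tilde g_{1}=g_{1}(\cdot,\overline{y}_{1})\geq g_{2}(\cdot,\overline{y}_{2})=\tilde g_{2}$ in one step.
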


\begin{proof}
We use Picard iteration and we shall prove that $Y_{1}^{n}(t)\geq
Y_{2}^{n}(t)$ for all $n$ and $t$ by using induction on $n$. Let $Y_{1}%
^{0}(t)=\mathbb{E}[\xi_{1}]$ and $Y_{2}^{0}(t)=\mathbb{E}[\xi_{2}].$ Then
\[
Y_{1}^{0}(t)\geq Y_{2}^{0}(t)\text{, for each }t\geq0.
\]
Assume
\[
Y_{1}^{n}(t)\geq Y_{2}^{n}(t)\text{, for each }t\geq0.
\]
%We suppose it is true for $n$ and we want to show this inequality for $n+1$.
Define the triple $(Y_{i}^{n+1}(t),Z_{i}^{n+1}(t),K_{i}^{n+1}(t,\cdot
))_{i=1,2}$, as follows
\[%
\begin{array}
[c]{c}%
Y_{i}^{n+1}(t)=\xi+%
%TCIMACRO{\tint _{t}^{T}}%
%BeginExpansion
{\textstyle\int_{t}^{T}}
%EndExpansion
g_{i}(s,Y_{i}^{n+1}(s),Z_{i}^{n+1}(s),K_{i}^{n+1}(s,\cdot),\mathbb{E}%
[Y_{i}^{n}(s)])ds-%
%TCIMACRO{\tint _{t}^{T}}%
%BeginExpansion
{\textstyle\int_{t}^{T}}
%EndExpansion
Z_{i}^{n+1}(s)dB(s)\\
-%
%TCIMACRO{\tint _{t}^{T}}%
%BeginExpansion
{\textstyle\int_{t}^{T}}
%EndExpansion%
%TCIMACRO{\tint _{\mathbb{R}_{0}}}%
%BeginExpansion
{\textstyle\int_{\mathbb{R}_{0}}}
%EndExpansion
K^{n+1}(s,\zeta)\tilde{N}(ds,d\zeta),t\in\left[  0,T\right]  ,
\end{array}
\]
where $Y^{n}(t)$ is knowing. Define $\bar{g}_{i}(t,y,z,k):=g_{i}%
(t,y,z,k,\mathbb{E}[Y_{i}^{n}(t)])$, then
\[%
\begin{array}
[c]{c}%
Y_{i}^{n+1}(t)=\xi+%
%TCIMACRO{\tint _{t}^{T}}%
%BeginExpansion
{\textstyle\int_{t}^{T}}
%EndExpansion
\bar{g}_{i}(s,Y_{i}^{n+1}(s),Z_{i}^{n+1}(s),K_{i}^{n+1}(s,\cdot))ds-%
%TCIMACRO{\tint _{t}^{T}}%
%BeginExpansion
{\textstyle\int_{t}^{T}}
%EndExpansion
Z_{i}^{n+1}(s)dB(s)\\
-%
%TCIMACRO{\tint _{t}^{T}}%
%BeginExpansion
{\textstyle\int_{t}^{T}}
%EndExpansion%
%TCIMACRO{\tint _{\mathbb{R}_{0}}}%
%BeginExpansion
{\textstyle\int_{\mathbb{R}_{0}}}
%EndExpansion
K^{n+1}(s,\zeta)\tilde{N}(ds,d\zeta),t\in\left[  0,T\right]  .
\end{array}
\]
We have by our assumptions that
\[
\bar{g}_{1}(t,y,z,k)\geq\bar{g}_{2}(t,y,z,k),\text{ for each }t\geq0.
\]
{By} the comparison theorem for BSDE with jumps e.g. Theorem 2.3 in Royer \cite{R}, it follows that $Y_{1}^{n+1}(t)\geq
Y_{2}^{n+1}(t)$ for all $t\geq0$.\newline By our convergence result
\ref{conv}, we conclude that%
\[
Y_{1}(t)\geq Y_{2}(t),\text{ for each }t\geq0.
\]

\end{proof}

\section{Mean-field recursive utility}

We consider in this section a mean-field recursive utility process $Y(t)$, defined to be the first component of the
solution triplet $(Y,Z,K)$ of the following mean-field bsde:

\begin{equation}
\left\{
\begin{array}
[c]{ll}%
dY(t) & =-g(t,Y(t),Z(t),K(t,\cdot),\mathbb{E[}Y(t)\mathbb{]},\mathbb{E[}%
Z(t)\mathbb{]},\mathbb{E[}K(t,\cdot)\mathbb{]},\pi(t))dt\\
& \text{ \ \ \ \ \ \ \ \ \ \ }+Z(t)dB(t)+%
%TCIMACRO{\tint _{\mathbb{R}_{0}}}%
%BeginExpansion
{\textstyle\int_{\mathbb{R}_{0}}}
%EndExpansion
K(t,\zeta)\tilde{N}(dt,d\zeta),t\in\left[  0,T\right]  ,\\
Y(T) & =\xi.
\end{array}
\right.  \label{ru_bsde}%
\end{equation}
We denote by $\mathcal{U}$, the set of all consumption processes. For each
$\pi(t)\in\mathcal{U}$, the driver $g:\Omega\times\lbrack0,T]\times\mathcal{%
%TCIMACRO{\U{211d} }%
%BeginExpansion
\mathbb{R}
%EndExpansion
}^{2}\times L_{\nu}^{2}\times\mathcal{%
%TCIMACRO{\U{211d} }%
%BeginExpansion
\mathbb{R}
%EndExpansion
}^{2}\times L_{\nu}^{2}\times\mathcal{U}\rightarrow\mathcal{%
%TCIMACRO{\U{211d} }%
%BeginExpansion
\mathbb{R}
%EndExpansion
}$ and the terminal value $\xi$ satisfies assumptions (I). Suppose that
$(y,z,k,\overline{y},\overline{z},\overline{k},\pi)\mapsto g(t,y,z,k,\bar
{y},\overline{z},\overline{k},\pi)$ is concave for each $t\in\lbrack0,T]$. The
driver
\[
g(t,Y(t),Z(t),K(t,\cdot),\mathbb{E[}Y(t)\mathbb{]},\mathbb{E[}Z(t)\mathbb{]}%
,\mathbb{E[}K(t,\cdot)\mathbb{]},\pi(t))
\]
represents the instantaneous utility at time $t$ of the consumption rate
$\pi(t)\geq0$, such that
\[
\mathbb{E[}%
%TCIMACRO{\tint _{0}^{T}}%
%BeginExpansion
{\textstyle\int_{0}^{T}}
%EndExpansion
|g(t,0,0,0,0,\pi(t))|^{2}dt]<\infty\text{, for all }t\in\left[  0,T\right]  .
\]
\newline We call a process $\pi(t)$ a consumption rate process if $\pi(t)$ is
predictable and $\pi(t)\geq0$ for each $t$ $\mathbb{P}$-a.s. Then
$Y(t)=Y_{g}(0)$ is called a mean-field recursive utility process of the
consumption $\pi(\cdot)$, and the number $U(\pi)=Y_{g}(0)$ is called the total
mean-field recursive utility of $\pi(\cdot)$. This is an extension to
mean-field (and jumps) of the classical recursive utility concept of Duffie
and Epstein \cite{DE}. See also Duffie and Zin \cite{DZ}, Kreps and Parteus
\cite{KP}, El Karoui et al \cite{EPQ}, \O ksendal and Sulem \cite{OS3} and
Agram and R\o se \cite{AR} and the reference their in. Finding the consumption
rate $\hat{\pi}$ which maximizes its total mean-field recursive utility is an
interesting problem in mean-field stochastic control.\newline

\subsection{Optimization problem}

We discuss now the optimization problem related to the recursive utility. The
wealth process $X(t)=X^{\pi}(t)$ is given by the following linear
sde{\normalsize
\begin{equation}
\left\{
\begin{array}
[c]{ll}%
dX(t) & =[b_{0}(t)-\pi(t)]X(t)dt+\sigma_{0}(t)X(t)dB(t)\\
& +%
%TCIMACRO{\tint _{\mathbb{R}_{0}}}%
%BeginExpansion
{\textstyle\int_{\mathbb{R}_{0}}}
%EndExpansion
\gamma_{0}(t,\zeta)X(t)\tilde{N}(dt,d\zeta),t\in\lbrack0,T],\\
X(0) & =x_{0},
\end{array}
\right.  \label{SDE}%
\end{equation}
}where the initial value $x_{0}>0$, and the functions $b_{0},$ $\sigma_{0}$,
$\gamma_{0}$ are assumed to be deterministic functions, $\pi$ is our relative
consumption rate at time $t$, assumed to be a c\`{a}dl\`{a}g $\mathbb{F}%
$-adapted process. We assume that $%
%TCIMACRO{\tint _{0}^{T}}%
%BeginExpansion
{\textstyle\int_{0}^{T}}
%EndExpansion
\pi(t)dt<\infty$ $\mathbb{P}$-a.s. This implies that our wealth process
$X(t)>0$ for all $t$ $\mathbb{P}$-a.s. Define the recursive utility process
$Y(t)=Y^{\pi}(t)$ by the linear mean-field bsde in the unknown triplet
$(Y,Z,K)=(Y^{\pi},Z^{\pi},K^{\pi})\in S^{2}\times L^{2}\times H_{\nu}^{2}$,
by{\normalsize
\begin{equation}
\left\{
\begin{array}
[c]{ll}%
dY(t) & =-[\alpha_{0}(t)Y(t)+\alpha_{1}(t)\mathbb{E[}Y(t)]+\beta
_{0}(t)Z(t)+\beta_{1}(t)\mathbb{E[}Z(t)]\\
& +%
%TCIMACRO{\tint _{\mathbb{R}_{0}}}%
%BeginExpansion
{\textstyle\int_{\mathbb{R}_{0}}}
%EndExpansion
\{\eta_{0}(t,\zeta)K(t,\zeta)+\eta_{1}(t,\zeta)\mathbb{E[}K(t,\zeta
)]\}\nu(d\zeta)+\ln(\pi(t)X(t)\text{)}]dt\\
& +Z(t)dB(t)+%
%TCIMACRO{\tint _{\mathbb{R}_{0}}}%
%BeginExpansion
{\textstyle\int_{\mathbb{R}_{0}}}
%EndExpansion
K(t,\zeta)\tilde{N}(dt,d\zeta),t\in\left[  0,T\right]  ,\\
Y(T) & =\theta X(T),
\end{array}
\right.  \label{ru_bsde}%
\end{equation}
}where $\theta=\theta(\omega)>0$ is a given bounded random variable and
$\alpha_{0},\alpha_{1},\beta_{0},\beta_{1},\eta_{0},\eta_{1}$ are given
deterministic functions with $\eta_{0}(t,\zeta),\eta_{1}(t,\zeta)\geq
-1.$\newline From the closed formula (\ref{cf_y}), the first component $Y(t)$ of the
solution triplet of the equation (\ref{ru_bsde}) can be written as
\begin{align*}
&
\begin{array}
[c]{c}%
Y(t)=\mathbb{E}^{\mathcal{F}_{t}}[\theta X(T)\Gamma(t,T)
\end{array}
\\
&
\begin{array}
[c]{c}%
+%
%TCIMACRO{\tint _{t}^{T}}%
%BeginExpansion
{\textstyle\int_{t}^{T}}
%EndExpansion
\Gamma(t,s)\{(\alpha_{1}(s),\beta_{1}(s),\eta_{1}(s,\zeta))V(s)+\ln
(\pi(s)X(s))\}ds],t\in\left[  0,T\right]
\end{array}
\end{align*}
where
\[%
\begin{array}
[c]{c}%
\Gamma(t,s)=\exp\{%
%TCIMACRO{\tint _{t}^{s}}%
%BeginExpansion
{\textstyle\int_{t}^{s}}
%EndExpansion
\beta_{0}(r)dB(r)+%
%TCIMACRO{\tint _{t}^{s}}%
%BeginExpansion
{\textstyle\int_{t}^{s}}
%EndExpansion
(\alpha_{0}(r)-\tfrac{1}{2}(\beta_{0}(r))^{2})dr\\
\text{ \ \ \ \ \ \ \ \ \ \ \ \ \ \ \ \ \ }+%
%TCIMACRO{\tint _{t}^{s}}%
%BeginExpansion
{\textstyle\int_{t}^{s}}
%EndExpansion%
%TCIMACRO{\tint _{\mathbb{R}_{0}}}%
%BeginExpansion
{\textstyle\int_{\mathbb{R}_{0}}}
%EndExpansion
(\ln(1+\eta_{0}(r,\zeta))-\eta_{0}(r,\zeta))\nu(d\zeta)dr\\
\text{ \ \ \ \ }+%
%TCIMACRO{\tint _{t}^{s}}%
%BeginExpansion
{\textstyle\int_{t}^{s}}
%EndExpansion%
%TCIMACRO{\tint _{\mathbb{R}_{0}}}%
%BeginExpansion
{\textstyle\int_{\mathbb{R}_{0}}}
%EndExpansion
(\ln(1+\eta_{0}(r,\zeta))\widetilde{N}(dr,d\zeta)\}.
\end{array}
\]
and
\[
V=%
%TCIMACRO{\tsum _{n=0}^{\infty}}%
%BeginExpansion
{\textstyle\sum_{n=0}^{\infty}}
%EndExpansion
A^{n}F\,.
\]
\newline We want to maximize the performance functional{\normalsize
\[
J(\pi):=Y(0)=\mathbb{E[}Y(0)\mathbb{]}.
\]
}The corresponding Hamiltonian to this optimization problem $H:[0,T]\times%
%TCIMACRO{\U{211d} }%
%BeginExpansion
\mathbb{R}
%EndExpansion
^{3}\times L_{\nu}^{2}\times%
%TCIMACRO{\U{211d} }%
%BeginExpansion
\mathbb{R}
%EndExpansion
^{3}\times\mathcal{U}\times%
%TCIMACRO{\U{211d} }%
%BeginExpansion
\mathbb{R}
%EndExpansion
^{2}\times L_{\nu}^{2}\times%
%TCIMACRO{\U{211d} }%
%BeginExpansion
\mathbb{R}
%EndExpansion
\rightarrow%
%TCIMACRO{\U{211d} }%
%BeginExpansion
\mathbb{R}
%EndExpansion
,$ is defined by%
\[%
\begin{array}
[c]{l}%
H(t,x,y,z,k(\cdot),\overline{y},\overline{z},\overline{k}(\cdot),\pi
,p,q,r(\cdot),\lambda)\\
=\mathbb{(}b_{0}-\pi)xp+\sigma_{0}xq+%
%TCIMACRO{\tint _{\mathbb{R}_{0}}}%
%BeginExpansion
{\textstyle\int_{\mathbb{R}_{0}}}
%EndExpansion
\gamma_{0}(\zeta)xr(\zeta)\nu(d\zeta)+\lambda\lbrack\alpha_{0}y+\alpha
_{1}\overline{y}\\
+\beta_{0}z+\beta_{1}\overline{z}+%
%TCIMACRO{\tint _{\mathbb{R}_{0}}}%
%BeginExpansion
{\textstyle\int_{\mathbb{R}_{0}}}
%EndExpansion
\{\eta_{0}(\zeta)k(\zeta)+\eta_{1}(\zeta)\overline{k}(\zeta)\}\nu(d\zeta
)+\ln\pi + \ln x]
\end{array}
\]
where the adjoint processes, for the linear mean field bsde
{\normalsize $(p,q,r)=(p^{\pi},q^{\pi},r^{\pi})$ }and for the linear
differential equation {\normalsize $\lambda=\lambda^{\pi}$ }corresponding to
{\normalsize $\pi,$ }are defined by

{\normalsize
\[
\left\{
\begin{array}
[c]{ll}%
dp(t) & =-[\mathbb{(}b_{0}(t)-\pi(t))p(t)+\sigma_{0}(t)q(t)+%
%TCIMACRO{\tint _{\mathbb{R}_{0}}}%
%BeginExpansion
{\textstyle\int_{\mathbb{R}_{0}}}
%EndExpansion
\gamma_{0}(t,\zeta)r(t,\zeta)\nu(d\zeta)]dt\\
& +q(t)dB(t)+%
%TCIMACRO{\tint _{\mathbb{R}_{0}}}%
%BeginExpansion
{\textstyle\int_{\mathbb{R}_{0}}}
%EndExpansion
r(t,\zeta)\tilde{N}(dt,d\zeta),t\in\left[  0,T\right]  ,\\
p(T) & =\theta,
\end{array}
\right.
\]
}and

{\normalsize
\[
\left\{
\begin{array}
[c]{ll}%
d\lambda(t) & =(\alpha_{0}(t)\lambda(t)+\alpha_{1}(t)\mathbb{E[}%
\lambda(t)])dt+(\beta_{0}(t)\lambda(t)+\beta_{1}(t)\mathbb{E[}\lambda
(t)])dB(t)\\
& +%
%TCIMACRO{\tint _{\mathbb{R}_{0}}}%
%BeginExpansion
{\textstyle\int_{\mathbb{R}_{0}}}
%EndExpansion
(\eta_{0}(t,\zeta)\lambda(t)+\eta_{1}(t,\zeta)\mathbb{E[}\lambda(t)])\tilde
{N}(dt,d\zeta)],t\in\left[  0,T\right]  ,\\
\lambda(0) & =1.
\end{array}
\right.
\]
}Consequently{\normalsize
\begin{equation}%
\begin{array}
[c]{c}%
\lambda(t)=\Upsilon^{-1}(t)[1+%
%TCIMACRO{\tint _{0}^{t}}%
%BeginExpansion
{\textstyle\int_{0}^{t}}
%EndExpansion
\{ \Upsilon(r)(\alpha_{1}(r)\mathbb{E[}\lambda(r)]\\
+%
%TCIMACRO{\tint _{\mathbb{R}_{0}}}%
%BeginExpansion
{\textstyle\int_{\mathbb{R}_{0}}}
%EndExpansion
(\frac{1}{1+\eta_{0}(r,\zeta)}-1)\eta_{1}(r,\zeta)\mathbb{E[}\lambda
(r)]\}\nu(d\zeta)dr\\
+%
%TCIMACRO{\tint _{0}^{t}}%
%BeginExpansion
{\textstyle\int_{0}^{t}}
%EndExpansion
\Upsilon(r)\beta_{1}(r)\mathbb{E[}\lambda(r)]dB(r)\\
+%
%TCIMACRO{\tint _{0}^{t}}%
%BeginExpansion
{\textstyle\int_{0}^{t}}
%EndExpansion%
%TCIMACRO{\tint _{\mathbb{R}_{0}}}%
%BeginExpansion
{\textstyle\int_{\mathbb{R}_{0}}}
%EndExpansion
\Upsilon(r)\frac{\eta_{1}(r,\zeta)\mathbb{E[}\lambda(r)]}{1+\eta_{0}(r,\zeta
)}\widetilde{N}(dr,d\zeta)\},
\end{array}
\label{Eqlamda}%
\end{equation}
where}%

\[%
\begin{array}
[c]{c}%
\Upsilon(t)=\exp(%
%TCIMACRO{\tint _{0}^{t}}%
%BeginExpansion
{\textstyle\int_{0}^{t}}
%EndExpansion
\{-\alpha_{0}(r)+\tfrac{1}{2}\beta_{0}^{2}(r)-%
%TCIMACRO{\tint _{\mathbb{R}_{0}}}%
%BeginExpansion
{\textstyle\int_{\mathbb{R}_{0}}}
%EndExpansion
\{\ln(1+\eta_{0}(r,\zeta))-\eta_{0}(r,\zeta)\}\nu(d\zeta)dr\\
-%
%TCIMACRO{\tint _{0}^{t}}%
%BeginExpansion
{\textstyle\int_{0}^{t}}
%EndExpansion
\beta_{0}(r)dB(r)+%
%TCIMACRO{\tint _{0}^{t}}%
%BeginExpansion
{\textstyle\int_{0}^{t}}
%EndExpansion%
%TCIMACRO{\tint _{\mathbb{R}_{0}}}%
%BeginExpansion
{\textstyle\int_{\mathbb{R}_{0}}}
%EndExpansion
\ln(1+\eta_{1}(r,\zeta))\widetilde{N}(dr,d\zeta)\},
\end{array}
\]%
\[
\mathbb{E[}\lambda(r)]=\exp(%
%TCIMACRO{\tint _{0}^{t}}%
%BeginExpansion
{\textstyle\int_{0}^{t}}
%EndExpansion
\{\alpha_{0}(r)+\alpha_{1}(r)\}dr),
\]
and{\normalsize
\begin{equation}
p(t)=\mathbb{E}^{\mathcal{F}_{t}}[\theta],t\in\left[  0,T\right]  ,\text{
}\label{Ep}%
\end{equation}
}Now differentiate $H$ with respect to $\pi$, we obtain {\normalsize
\begin{equation}
\tfrac{\partial}{\partial\pi}H(t)=-p(t)+\tfrac{\lambda(t)}{\pi(t)}.\nonumber
\end{equation}
}The first order necessary condition of optimality, yields:\newline

\begin{theorem}
The optimal control $\widehat{{\normalsize \pi}}${\normalsize $(t)$ }is given
by{\normalsize \
\begin{equation}
\widehat{\pi}(t)=\tfrac{\widehat{\lambda}(t)}{\widehat{p}(t)}, \label{maxcond}%
\end{equation}
where }$\widehat{\lambda}(t)$ and $\widehat{p}(t)$ are the solutions of
the equations (\ref{Eqlamda}) and (\ref{Ep}) respectively, corresponding to
the optimal control $\widehat{\pi}(t)$.
\end{theorem}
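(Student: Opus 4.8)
The plan is to obtain \eqref{maxcond} as the first-order necessary condition of the mean-field Pontryagin maximum principle applied to the forward-backward system \eqref{SDE}--\eqref{ru_bsde} with performance $J(\pi)=\mathbb{E}[Y^{\pi}(0)]$, in the spirit of Agram and \O ksendal \cite{A,AO2}. First I would fix an optimal $\widehat\pi$, pick a bounded predictable perturbation $\beta$ such that $\widehat\pi+s\beta$ remains admissible (in particular nonnegative) for all small $s\ge 0$, and compute the directional (G\^ateaux) derivative $\tfrac{d}{ds}J(\widehat\pi+s\beta)\big|_{s=0^+}$. Because \eqref{SDE} is linear and the driver in \eqref{ru_bsde} is linear in $(Y,Z,K,\mathbb{E}[Y],\mathbb{E}[Z],\mathbb{E}[K])$ and of class $C^1$ in the control (through the term $\ln(\pi X)$), the variational processes $(\dot X,\dot Y,\dot Z,\dot K)$ solve well-posed linear mean-field equations, and the interchange of $\tfrac{d}{ds}$ with the expectation is justified by the integrability in Assumption~\ref{a.3.1} together with $\int_0^T\widehat\pi(t)\,dt<\infty$ a.s.

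The crux is a duality computation: apply the \Ito formula to the products $\widehat p(t)\dot X(t)$ and $\widehat\lambda(t)\dot Y(t)$, where $(\widehat p,\widehat q,\widehat r)$ solves the adjoint equation for the wealth process with $\widehat p(T)=\theta$ and $\widehat\lambda$ solves the adjoint mean-field SDE for the recursive utility with $\widehat\lambda(0)=1$, the coefficients of the latter being precisely the partial derivatives of the driver in its state and law arguments (hence $\alpha_0,\alpha_1,\beta_0,\beta_1,\eta_0,\eta_1$). Taking expectations removes the $dB$- and $\tilde N$-martingale parts, the contributions of the linear drifts cancel against the adjoint dynamics, and the surviving terms telescope to
\[
\frac{d}{ds}J(\widehat\pi+s\beta)\Big|_{s=0^+}=\mathbb{E}\Big[\int_0^T \frac{\partial H}{\partial\pi}(t)\,\beta(t)\,dt\Big],
\]
with $H$ the Hamiltonian introduced above. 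Since $\widehat\pi$ is optimal this quantity is $\le 0$ for every admissible $\beta$, and because $\widehat\pi>0$ (which is needed already for $\ln\widehat\pi$ to make sense) one may choose $\beta$ of either sign; hence $\tfrac{\partial H}{\partial\pi}(t)=0$ for $dt\times d\mathbb{P}$-almost every $(t,\omega)$.

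It then remains to solve this stationarity equation, which is elementary: from the explicit form of $H$ one gets $\tfrac{\partial H}{\partial\pi}(t)=-\widehat p(t)+\widehat\lambda(t)/\widehat\pi(t)$, and since $\theta>0$ we have $\widehat p(t)=\mathbb{E}^{\mathcal{F}_t}[\theta]>0$ while $\widehat\lambda(t)>0$ as the solution of a linear SDE started at $1$; therefore $-\widehat p(t)+\widehat\lambda(t)/\widehat\pi(t)=0$ has the unique, strictly positive root $\widehat\pi(t)=\widehat\lambda(t)/\widehat p(t)$, consistent with the interiority used above. The concavity of $g$ in $(y,z,k,\bar y,\bar z,\bar k,\pi)$ would in fact turn this necessary condition into a sufficient one, but that is not needed for the statement. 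I expect the duality step to be the main obstacle: one must verify carefully that the adjoint equation for the recursive utility is a \emph{forward} mean-field SDE whose drift, diffusion, and jump coefficients correctly carry the law-derivative terms $\alpha_1\mathbb{E}[\lambda]$, $\beta_1\mathbb{E}[\lambda]$, $\eta_1\mathbb{E}[\lambda]$, and that the jump cross-terms $\int_{\mathbb{R}_0}\gamma_0(t,\zeta)\,x\,r(t,\zeta)\,\nu(d\zeta)$ and $\int_{\mathbb{R}_0}\{\eta_0 k+\eta_1\bar k\}\nu(d\zeta)$ pair up correctly when the $\tilde N$-integrals are expanded in the \Ito formula.
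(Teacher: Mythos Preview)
Your proposal is correct and follows essentially the same route as the paper: apply the mean-field Pontryagin maximum principle, identify the Hamiltonian and the two adjoint processes $(p,q,r)$ and $\lambda$, and solve the first-order condition $\partial H/\partial\pi=0$ to obtain $\widehat\pi=\widehat\lambda/\widehat p$. The only difference is one of level of detail: the paper simply writes down $H$, the adjoint equations, and the derivative $\partial H/\partial\pi$ and then states the theorem, whereas you actually sketch the variational/duality computation (perturbation $\widehat\pi+s\beta$, It\^o's formula applied to $\widehat p\,\dot X$ and $\widehat\lambda\,\dot Y$, cancellation against the adjoint dynamics) that underlies the maximum principle the paper is invoking.
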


\bigskip
\section{Appendix}

\subsection{Special case of linear mean-field bsde}

We first define the measure $\mathbb{Q}$ by
\[
d\mathbb{Q}=M(T)d\mathbb{P}\text{ on }\mathcal{F}_{T},
\]
where
\begin{align*}
M(t) &  :=\exp(%
%TCIMACRO{\tint _{0}^{t}}%
%BeginExpansion
{\textstyle\int_{0}^{t}}
%EndExpansion
\beta^{1}(s)dB(s)-\tfrac{1}{2}%
%TCIMACRO{\tint _{0}^{t}}%
%BeginExpansion
{\textstyle\int_{0}^{t}}
%EndExpansion
(\beta^{1}(s))ds+%
%TCIMACRO{\tint _{0}^{t}}%
%BeginExpansion
{\textstyle\int_{0}^{t}}
%EndExpansion%
%TCIMACRO{\tint _{{\mathbb{R}}_{0}}}%
%BeginExpansion
{\textstyle\int_{{\mathbb{R}}_{0}}}
%EndExpansion
\ln(1+\eta^{1}(s,\zeta))\tilde{N}(ds,d\zeta)\\
&  +%
%TCIMACRO{\tint _{0}^{t}}%
%BeginExpansion
{\textstyle\int_{0}^{t}}
%EndExpansion%
%TCIMACRO{\tint _{{\mathbb{R}}_{0}}}%
%BeginExpansion
{\textstyle\int_{{\mathbb{R}}_{0}}}
%EndExpansion
\{\ln(1+\eta^{1}(s,\zeta))-\eta^{1}(s,\zeta)\}\nu(d\zeta)ds);\quad0\leq t\leq
T.
\end{align*}
Then, under the measure $\mathbb{Q}$ the process
\begin{equation}
B_{\mathbb{Q}}(t):=B(t)-%
%TCIMACRO{\tint _{0}^{t}}%
%BeginExpansion
{\textstyle\int_{0}^{t}}
%EndExpansion
\beta^{1}(s)ds\,,\quad0\leq t\leq T\,,\label{e.def_bq}%
\end{equation}
is a Brownian motion, and the random measure
\begin{equation}
\tilde{N}_{\mathbb{Q}}(dt,d\zeta):=\tilde{N}(dt,d\zeta)-\eta^{1}(t,\zeta
)\nu(d\zeta)dt\label{e.def_nq}%
\end{equation}
is the $\mathbb{Q}$-compensated Poisson random measure of $N(\cdot,\cdot)$, in
the sense that the process%

\[
\tilde{N}_{\gamma}(t):=%
%TCIMACRO{\tint _{0}^{t}}%
%BeginExpansion
{\textstyle\int_{0}^{t}}
%EndExpansion%
%TCIMACRO{\tint _{{\mathbb{R}}_{0}}}%
%BeginExpansion
{\textstyle\int_{{\mathbb{R}}_{0}}}
%EndExpansion
\gamma(s,\zeta)\tilde{N}_{\mathbb{Q}}(ds,d\zeta)
\]
is a local $\mathbb{Q}$-martingale, for all predictable processes
$\gamma(t,\zeta)$ such that
\[%
%TCIMACRO{\tint _{0}^{T}}%
%BeginExpansion
{\textstyle\int_{0}^{T}}
%EndExpansion%
%TCIMACRO{\tint _{{\mathbb{R}}_{0}}}%
%BeginExpansion
{\textstyle\int_{{\mathbb{R}}_{0}}}
%EndExpansion
(\gamma(t,\zeta)\eta^{1}(t,\zeta))^{2}\nu(d\zeta)dt<\infty.
\]
Consider the following linear mean field bsde
\begin{equation}
\left\{
\begin{array}
[c]{ll}%
dY(t) & =-[\alpha^{1}(t)Y(t)+\beta^{1}(t)Z(t)+%
%TCIMACRO{\tint _{\mathbb{R}_{0}}}%
%BeginExpansion
{\textstyle\int_{\mathbb{R}_{0}}}
%EndExpansion
\eta^{1}(t,\zeta)K(t,\zeta)\nu(d\zeta)+\alpha^{2}(t)\mathbb{E}_{\mathbb{Q}%
}\mathbb{[}Y(t)]\\
& +\gamma(t)]dt+Z(t)dB(t)+%
%TCIMACRO{\tint _{\mathbb{R}_{0}}}%
%BeginExpansion
{\textstyle\int_{\mathbb{R}_{0}}}
%EndExpansion
K(t,\zeta)\tilde{N}(ds,d\zeta),t\in\left[  0,T\right]  ,\\
Y(T) & =\xi,
\end{array}
\right.  \label{p_lbsde}%
\end{equation}
where $\alpha^{1}(t),\alpha^{2}(t),\beta^{1}(t),\eta^{1}(t,\cdot)$ are given
deterministic functions and $\gamma(t)$ is $\mathbb{F}$-adapted and $\xi\in
L^{2}\left(  \Omega,\mathcal{F}_{T}\right)  $. Then, by change of measure, the
linear mean field bsde (\ref{p_lbsde}) is equivallent to
\begin{equation}
\left\{
\begin{array}
[c]{ll}%
dY(t) & =-[\alpha^{1}(t)Y(t)+\alpha^{2}(t)\mathbb{E_{\mathbb{Q}}[}%
Y(t)]+\gamma(s)]dt+Z(t)dB_{\mathbb{Q}}(t)\\
& \text{ \ \ \ \ \ \ \ \ }+%
%TCIMACRO{\tint _{\mathbb{R}_{0}}}%
%BeginExpansion
{\textstyle\int_{\mathbb{R}_{0}}}
%EndExpansion
K(t,\zeta)\tilde{N}_{\mathbb{Q}}(ds,d\zeta),t\in\left[  0,T\right]  ,\\
Y(T) & =\xi.
\end{array}
\right.  \label{Q_lbsde}%
\end{equation}
Then, $Y(t)$ is given by
\begin{equation}%
\begin{array}
[c]{ll}%
Y(t) & =\mathbb{E}_{\mathbb{Q}}^{\mathcal{F}_{t}}[\xi\Gamma^{\prime}%
(t,T)+\int_{t}^{T}\Gamma^{\prime}(t,s)\{\alpha^{2}(s)\mathbb{E}_{\mathbb{Q}%
}\mathbb{[}Y(s)]+\gamma(s)\}ds],t\in\left[  0,T\right]  ,\text{ a.s.,}%
\end{array}
\label{cf_l_q}%
\end{equation}
for $\Gamma^{\prime}(t,s)=\exp(%
%TCIMACRO{\tint _{t}^{s}}%
%BeginExpansion
{\textstyle\int_{t}^{s}}
%EndExpansion
\alpha^{1}(r)dr).$\newline It remains to find $\mathbb{E}_{\mathbb{Q}%
}\mathbb{[}Y(t)]$. Taking the expectation of both sides of (\ref{Q_lbsde}), we
end up with%
\[
\mathbb{E_{\mathbb{Q}}[}Y(t)]=\exp(-%
%TCIMACRO{\tint _{0}^{t}}%
%BeginExpansion
{\textstyle\int_{0}^{t}}
%EndExpansion
\{\alpha^{1}(s)+\alpha^{2}(s)\}ds)(Y(0)+%
%TCIMACRO{\tint _{0}^{t}}%
%BeginExpansion
{\textstyle\int_{0}^{t}}
%EndExpansion
\tfrac{\mathbb{E}_{\mathbb{Q}}\mathbb{[}\gamma(s)]}{\alpha^{1}(s)}\exp(%
%TCIMACRO{\tint _{0}^{s}}%
%BeginExpansion
{\textstyle\int_{0}^{s}}
%EndExpansion
\{\alpha^{1}(r)+\alpha^{2}(r)\}dr)ds),
\]
for some deterministic value $Y(0).$

\end{document}